\documentclass{amsart}

\usepackage{
amssymb,
amsfonts,
amsmath,
amsthm,
mathtools,
%MnSymbol,
commath
}
\usepackage[pagewise]{lineno}
\usepackage[utf8]{inputenc}	
\usepackage{hyperref, color}
\hypersetup{
	colorlinks = true,
	linkcolor = blue, %citações de equações e teoremas
	filecolor = red,
	urlcolor = red,
	citecolor = blue %citações de referências
}

\newcommand{\R}{\mathbb R}

\newcommand{\TT}{\mathrm T}
\newcommand{\Q}{\mathbb Q}

\newcommand{\N}{\mathbb N}
\newcommand{\h}{\mathbb H}
\newcommand{\s}{\mathbb S}

\newcommand{\id}{\mathrm{Id}}
\newcommand{\Ric}{\mathrm{Ric}}
\newcommand{\Scal}{\mathrm{Scal}}
\newcommand{\trace}{\mathrm{tr} \,}
\newcommand{\dotprod}[2]{\langle #1, #2 \rangle}

\newtheorem{theorem}{Theorem}[section]
\newtheorem{lemma}[theorem]{Lemma}
\newtheorem{proposition}[theorem]{Proposition}
\newtheorem{corollary}[theorem]{Corollary}

\theoremstyle{definition}

\newtheorem{remark}[theorem]{Remark}

\numberwithin{equation}{section}
\numberwithin{figure}{section}

\begin{document}
%\linenumbers
%Remarks on
\title[Minimal Kähler submanifolds in product of Space Forms]{Minimal Kähler submanifolds in product of\\ Space Forms}

\author[Alcides de Carvalho]{Alcides de Carvalho}
\address{Universidade de S\~ao Paulo \\
ICMC-USP \\
Av. Trabalhador S\~ao-Carlense 400 \\
13560-970 S\~ao Carlos – SP \\
 Brazil}
\email{alcidesj@impa.br}

\author[Iury Domingos]{Iury Domingos}
\address{Universidade Federal da Paraíba\\
   Departamento de Matemática\\
   58051-900 João Pessoa - Paraíba\\
   Brazil}
\email{domingos1@univ-lorraine.fr}

\thanks{The first author is supported by Fapesp grant 2019/177-0.}

\keywords{Isometric immersions, minimal immersions, Kähler manifolds}

\subjclass[2020]{Primary 53C42; Secondary 53B25, 32Q15}

% 53C42  	Differential geometry of immersions
% 53B25  	Local submanifolds
% 53C40  	Global submanifolds
% 53B35  	Local differential geometry of Hermitian and Kählerian structures
% 32Q15  	Kähler manifolds
% 53C55  	Global differential geometry of Hermitian and Kählerian manifolds

\begin{abstract} In this article, we study minimal isometric immersions of Kähler manifolds into product of two real space forms. We analyse the obstruction conditions to the existence of pluriharmonic isometric immersions of a Kähler manifold into those spaces and we prove that the only ones into $\s^{m-1}\times \R$ and $\h^{m-1}\times \R$ are the minimal isometric immersions of Riemannian surfaces. Futhermore, we show that the existence of a minimal isometric immersion of a Kähler manifold $M^{2n}$ into $\mathbb{S}^{m-1}\times\R$ and $\mathbb{S}^{m-k}\times \mathbb{H}^k$ imposes strong restrictions on the Ricci and scalar curvatures of $M^{2n}$. In this direction, we characterise some cases as either isometric immersions with parallel second fundamental form or anti-pluriharmonic isometric immersions.
\end{abstract}

\maketitle

\section{Introduction}\label{introduction}

This paper deals with minimal isometric immersions of a Kähler manifold into product of two real space forms. More specifically, we will be interested first with obstructions to the existence of pluriharmonic isometric immersions and secondly with restrictions on the Ricci curvature and scalar curvature of minimal Kähler isometric immersions into those spaces.

Over the years, pluriharmonic isometric immersions have been studied by several authors. In the literature, they are also called by $(1,1)$-geodesic immersions and circular immersions  (cf. \cite{Dajczer-Gromoll-85,FerreiraRigoliTribuzy93}). Those immersions appear as a natural extension of minimal immersions of Riemann surfaces into a target space, therefore they are minimal in the classical sense. The simplest examples of pluriharmonic isometric immersions are orientable minimal surfaces in an arbitrary Riemannian manifolds and holomorphic isometric immersions between Kähler manifolds, and it is important to notice that those immersions also have associated families of pluriharmonic isometric immersions when the ambient manifold is a Riemannian symmetric space (cf. \cite{Esch-Tribuzy98}).

In real space forms, the study of pluriharmonic isometric immersions and their associated families is due to Dajczer and Gromoll (cf.  \cite{Dajczer-Gromoll-85}). They proved that for each non-holomorphic pluriharmonic isometric immersion into real space forms there exists a one-parameter family of noncongruent pluriharmonic submanifolds, likewise minimal surfaces in three-dimensional space forms. %Moreover, the set of all pluriharmonic isometric immersions of a given Kähler manifold into Euclidean space have a unique holomorphic representative.
Dajczer and Rodríguez proved another interesting fact about pluriharmonic isometric immersions of Kähler manifolds into Euclidean spaces. They showed that pluriharmonic and minimal Kähler submanifolds mean the same in Euclidean spaces, although this is not obvious (cf. \cite{DajczerRodriguez86}). In addition, they proved that the only minimal isometric immersions of Kähler
manifold $M^{2n}$ into $\h^m$ are the minimal isometric immersions
of Riemannian surfaces.

More generally for locally symmetric Riemannian manifold of non-compact type, Ferreira, Rigoli and Tribuzy showed that pluriharmonic isometric immersions and minimal isometric immersions of Kähler manifold $M^{2n}$ into those spaces are also the same objects (cf. \cite{FerreiraRigoliTribuzy93}). Under some assumptions on the Ricci and scalar curvature of those target spaces, they proved additionally that the only pluriharmonic isometric immersion of Kähler manifold $M^{2n}$ into conformally flat Riemannian manifolds are the minimal isometric immersions of Riemannian surfaces.

In a seminal work, Takahashi established a necessary condition on the Ricci curvature $\Ric_M$ for that a given Riemannian manifold $M^n$ admits a minimal isometric immersion into a real space form of constant sectional curvature $c$ (cf. \cite{Takahashi}). This geometric restriction appears naturally as a consequence of Gauss equation for minimal isometric immersions and, up to normalization, the Ricci curvature must satisfy $\Ric_M\leq c(n-1)$, with $n\geq 2$. In another direction of \cite{DajczerRodriguez86}, Dajczer and Rodríguez proved if we want to immerse minimally in $\s^m$ a Kähler manifold $M^{2n}$, this necessary condition will be more restrictive. In this case, the Ricci curvature must satisfy $\Ric_M\leq nc$, with $n\geq 1$. In both works, under assumption of the existence of a minimal isometric immersion, the authors characterise the equality case as a totally geodesic isometric immersion (Takahashi theorem) and as an isometric immersion with parallel second fundamental form (Dajczer-Rodríguez theorem).

The aim of the work is to generalize these results to some products of real space forms.

First, we show that the only pluriharmonic isometric immersions of a Kähler manifold $M^{2n}$ into $\s^{m-1}\times \R$ and $\h^{m-1}\times \R$ are the minimal isometric immersions of Riemannian surfaces. We remark that minimal and pluriharmonic isometric immersions of a Kähler manifold into $\h^{m-1}\times \R$ are the same objects, by Ferreira-Rigoli-Tribuzy results. Dual results are obtained for maps into $\s^{m-k}\times\h^{k}$ and into warped product manifolds $I\times_{\rho}\mathbb{R}^{m-1}$, $I\times_{\rho}\mathbb{S}^{m-1}$ and $I\times_{\rho}\mathbb{H}^{m-1}$, where $I\subset\R$ is an interval, under some additional hypotheses.

Furthermore, we discuss how the existence of a minimal isometric immersion of a Kähler manifold $M^{2n}$ into $\mathbb{S}^{m-1}\times\R$ and $\mathbb{S}^{m-k}\times \mathbb{H}^k$ can to impose strong restrictions on the Ricci curvature and the scalar curvature of $M^{2n}$. In this direction, thanks to the complex structure of $M^{2n}$, we obtain a better upper bound of Ricci curvature of minimal isometric immersions of Kähler manifolds into those manifolds, and we characterise the equality case as isometric immersions with parallel second fundamental form. We also obtain an improvement to the upper bound of scalar curvature, and we characterise the equality case as anti-pluriharmonic isometric immersions. Moreover, we observe that our technique generalize those results to isometric immersions of a Kähler manifold into conformally flat Riemannian manifolds. This case was studied by Ferreira, Rigoli and Tribuzy with certain bounds assumptions on the Ricci curvature of the ambient space.

\section{Preliminaries}\label{preliminaries}

Let $c_1,c_2\in\R$ and $n_1,n_2\in\N$. We denote by $\mathbb{Q}^{n_i}_{c_i}$ be the $n_i$-dimensional simply connected Riemannian manifold of constant sectional curvature $c_i$, for $i=1,2$. As usual,  $\Q^{n}_{c}=\mathbb{S}^{n}_{c}$ is the $n$-Sphere for $c>0$, $\Q^{n}_{c}=\mathbb{R}^{n}$ is the Euclidean $n$-space for $c=0$ and $\Q^{n}_{c}=\mathbb{H}^{n}_{c}$ is the Hyperbolic $n$-space for $c<0$. Finally, we consider $\mathbb{Q}^m = \mathbb{Q}^{n_1}_{c_1}\times \mathbb{Q}^{n_2}_{c_2}$ be the Riemannian product manifold endowed with the product metric, denoted by $\dotprod{\cdot}{\cdot}$, where $m=n_1+n_2$, and let $\pi_i:\Q^m\to\Q^{n_i}_{c_i}$ be the projection onto the factor $\mathbb{Q}^{n_i}_{c_i}$, for $i=1,2$.

Throughout this work, we consider $(M^{2n},\dif s^2)$ a $2n$-dimensional simply connected \emph{Kähler} manifold with almost complex structure $J$, with $n\in\N$. This means that $M$ is a $2n$-dimensional simply connected smooth manifold, endowed with a Riemannian metric $\dif s^2$ (also denoted by $\dotprod{\cdot}{\cdot}$), such that the almost complex structure $J$ is a parallel orthogonal tensor on the tangent bundle of $M$, i.e.,
$J^2 = -\id_{\TT M}$,
\[\langle JX,JY\rangle  = 	\langle X, Y \rangle\]
and
\[(\nabla_X J)Y = \nabla_X JY - J\nabla_X Y = 0,\]
for all $X, Y \in \mathfrak{X}(M)$, where $\id_{\TT M}$ is the identity tensor on $\mathrm{T}M$, $\nabla$ denotes the Riemannian connection of $M$ and $\mathfrak{X}(M)=\Gamma(\TT M)$ denotes the section of $\TT M$.

We fix the Riemann curvature tensor $\mathcal{R}$ of $M$, given by
\[\mathcal{R}(X,Y)Z = \nabla_X \nabla_Y Z- \nabla_Y \nabla_X Z -\nabla_{[X,Y]}Z,\]
and the Ricci tensor $\Ric$ of $M$, is given by
\[\Ric(X,Y) = \text{ trace of the mapping } Z\mapsto \mathcal{R}(Z,X)Y,\]
for all $X,Y,Z\in\mathfrak{X}(M)$. In our convention, we consider the Ricci curvature, in the direction of a unit vector $X\in\mathfrak{X}(M)$, by the contraction of the Ricci tensor, i.e.,
\[\Ric(X)=\Ric(X,X),\]
and the scalar curvature function of $M$ by the trace of the Ricci curvature, i.e.,
\[\Scal = \sum_{i=1}^{2n} \Ric(X_i),\]
where $\{X_1,\ldots,X_{2n}\}$ is an orthonormal frame of $\mathrm{T} M$. In particular, when $n=1$, we have that
\[\Ric (X_i) = K_M \ \text{and} \ \Scal = 2K_M,\]
for $i=1,2$, where $K_M$ denotes the intrinsic curvature of $\dif s^2$.

We remark that the almost complex structure $J$ and the Riemann curvature tensor $\mathcal{R}$ satisfy
\[\mathcal{R}(X,Y)\circ J = J\circ\mathcal{R}(X,Y) \text{ \ and  \ } \mathcal{R}(JX,JY)=\mathcal{R}(X,Y),\]
and the almost complex structure $J$ and the Ricci tensor $\Ric$ satisfy
\[\Ric(JX,JY)=\Ric(X,Y),\]
for all $X, Y \in \mathfrak{X}(M)$ (for more details, the reader may refer to \cite[Chapter 9]{MR1393941}).

Given an isometric immersion $f: M^{2n} \to \mathbb{Q}^m$, $2n<m$, we denote by $\mathcal{R}^{\perp}$ the curvature tensor of the normal bundle  $\TT^{\perp}M$, by $\alpha$, seen as section of the bundle $\TT^{\ast}M\oplus \TT^{\ast}M\oplus \TT^{\perp}M,$ the second fundamental form of $f$
and by $A_{\xi}$ its Weingarten operator in the normal direction $\xi\in\mathfrak{X}(M)^\perp$, given by
$$\dotprod{A_\xi X}{Y} = \dotprod{\alpha(X, Y)}{\xi},$$
for all $X, Y \in \mathfrak{X}(M)$, where $\mathfrak{X}(M)^\perp=\Gamma(\TT^\perp M)$ denotes the section of $\TT^\perp M$.

In order to obtain a Bonnet-type theorem for isometric immersions into $\mathbb{Q}^m$ (and, more generally, considering the signature cases), Lira, Tojeiro and Vitório introduced in \cite{LiraTojeiroVitorio10} the tensors $R$, $S$ and $T$ defined by
\[R = L^t L, \ S = K^t L \text{ \ and \ } T = K^tK,\]
where
\[L = \dif\pi_2\circ f_{\ast} \in \Gamma(\TT^{\ast}M \oplus \TT\mathbb{Q}^{n_2}_{c_2,\mu_2}) \text{ \ and \ } K = \dif\pi_2|_{\TT^{\perp}M}\in \Gamma((\TT^{\perp}M)^{\ast}\oplus \TT\mathbb{Q}^{n_2}_{c_2,\mu_2} ).\]
The tensors $R$ and
$T$ are non-negative symmetric operators whose eigenvalues lie in $[0,1]$. In particular, $\trace{R}\in [0,n]$, as noted in \cite{MendoncaTorjeiro}. In a similar way, we can define
 \[\widetilde{L} = \dif\pi_1\circ f_{\ast} \in \Gamma(\TT^{\ast}M \oplus \TT\mathbb{Q}^{n_1}_{c_1,\mu_1}), \ \widetilde{K} = \dif\pi_1|_{\TT^{\perp}M}\in \Gamma((\TT^{\perp}M)^{\ast}\oplus \TT\mathbb{Q}^{n_1}_{c_1,\mu_1} ),\]
\[\widetilde{R} = \widetilde{L}^t \widetilde{L}, \ \widetilde{S} = \widetilde{K}^t \widetilde{L} \text{ \ and \ } \widetilde{T} = \widetilde{K}^t\widetilde{K},\]
where $\widetilde{R}$ and $\widetilde{T}$ are nonnegative symmetric operators whose eigenvalues lie in $[0,1]$, $\trace{\widetilde{R}}\in [0,n]$, and therefore
\[R+\widetilde{R}=\id_{\TT M}.\]
Under these notations, in \cite{LiraTojeiroVitorio10} the authors write the Gauss, Codazzi and Ricci equations as
\begin{multline}\label{Gauss-equation}
\mathcal{R}(X,Y)Z=\Big(c_1(X\wedge Y-X\wedge RY-RX\wedge Y)\\
+(c_1+c_2)RX\wedge RY\Big)Z+A_{\alpha(Y,Z)}X-A_{\alpha(X,Z)}Y,
\end{multline}
\begin{multline}\label{Codazzi-equation}
(\nabla_X^\perp \alpha)(Y,Z)-(\nabla_Y^\perp \alpha)(X,Z)=
c_1(\dotprod{X}{Z}SY-\dotprod{Y}{Z}SX)\\
+(c_1+c_2)(\dotprod{RY}{Z}SX-\dotprod{RX}{Z}SY),
\end{multline}
\begin{equation}\label{Ricci-equation}
\mathcal{R}^\perp(X,Y)\eta = \alpha(X,A_\eta Y)-\alpha(A_\eta X,Y)+(c_1+c_2)(SX\wedge SY)\eta,
\end{equation}
where $(X\wedge Y)Z= \dotprod{Y}{Z}X-\dotprod{X}{Z}Y$, for all $ X, Y,Z \in \mathfrak{X}(M)$.

In \cite{Dajczer-Gromoll-85}, Dajczer and Gromoll introduced the \emph{circular} isometric immersions of a Kähler manifold. Those isometric immersions are also known in the literature as \emph{$(1,1)$-geodesic} immersions and also as \emph{pluriharmonic} immersions  \cite{FerreiraRigoliTribuzy93,Udagawa}. In this work, we adopt the pluriharmonic terminology, and we recall that an isometric immersion $f:M^{2n}\to\Q^m$  of a Kähler manifold is said \emph{pluriharmonic} if the second fundamental form of $f$ satisfies
\[\alpha(X,JY) = \alpha(JX,Y), \text{ \ for all \ } X, Y \in \mathfrak{X}(M),\]
or equivalently, if the Weingarten operator $A_{\xi}$ of $f$ anticommutes with the almost complex structure $J$:
\[A_{\xi}J+JA_{\xi} = 0, \text{ \ for any \ }\xi\in \mathfrak{X}(M)^{\perp}.\] In particular, pluriharmonic immersions are minimal.

It is important to point out that for $n=1$, any orientable Riemannian surface $(\Sigma,\dif s^2)$ has a natural almost complex structure $J$. This structure is given by the rotation of angle $\pi/2$ on the tangent bundle $\TT\Sigma$ of $\Sigma$. Since for any $\xi\in\mathfrak{X}(\Sigma)^{\perp}$ we have $A_\xi J+JA_\xi = (\trace A_\xi) J$, then $f:\Sigma\to\Q^m$ is pluriharmonic if and only if $\trace{A_\xi}=0$ for any $\xi\in\mathfrak{X}(\Sigma)^{\perp}$, that is, if $f$ is a minimal immersion.

\section{Obstruction results}\label{obstruction-results}

In this section, we study the conditions to a minimal isometric immersion of a Kähler manifold into $\Q^m$ be a pluriharmonic immersion. As a consequence, we analyse the obstruction conditions to the existence of pluriharmonic immersions of Kähler manifolds into $\Q^m$.

In \cite{DajczerRodriguez86}, Dajczer and Rodríguez studied those kinds of problems for space forms $\Q^m_c$.  They concluded that pluriharmonic submanifolds and minimal submanifolds are the same objects in the Euclidean space. On the other hand, in the hyperbolic case, they showed that only surfaces can be immersed under assumption of minimality; and, in the spherical case, only surfaces can be immersed under assumption of pluriharmonicity. More generally in \cite{FerreiraRigoliTribuzy93}, Ferreira, Rigoli and Tribuzy show that pluriharmonic submanifolds are also equivalent to minimal submanifolds in locally symmetric Riemannian manifold of non-compact type.

For a given $f:M^{2n}\to\Q^m$ minimal isometric immersion of a Kähler manifold, our results are based in a \emph{pluriharmonicity property}: an equation that must be satisfied by the tensor $R$. Before finding this equation, we recall a general characterization of isometric immersions into slices of products of space forms proved by Mendonça and Tojeiro, \cite[Proposition 8]{MendoncaTorjeiro}. In terms of the trace of $R$, theses submanifolds are those on which either $\trace R = 0$ or $\trace R = \textrm{dim}M$. It is important to notice that, for their result, any assumption about almost complex structure on $M$ is required.

\begin{proposition}\label{prop-slice}
Let $f: M^n \to \Q^m$ be an isometric immersion. Then $f(M^n) \subset \Q^{n_1}_{c_1}\times\{p\}$ for some $p\in  \Q^{n_2}_{c_2}$ $($resp. $f(M^n) \subset \{p\}\times\Q^{n_2}_{c_2}$ for some $p\in  \Q^{n_1}_{c_1})$, if and only if $\trace{R} = 0$ $($resp. $\trace R=n)$.
\end{proposition}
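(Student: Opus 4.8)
The plan is to unwind the definition of the tensor $R$ and reduce the statement to the elementary fact that a smooth map with vanishing differential on a connected manifold is constant. Since $R = L^tL$ with $L = \dif\pi_2\circ f_\ast$, for every $X\in\TT M$ we have $\dotprod{RX}{X} = \dotprod{L^tLX}{X} = |LX|^2$, so that for any orthonormal frame $\{X_1,\dots,X_n\}$ of $\TT M$,
\[
\trace R \;=\; \sum_{i=1}^n |LX_i|^2 \;=\; \sum_{i=1}^n \bigl|\dif\pi_2(f_\ast X_i)\bigr|^2 .
\]
Hence $\trace R = 0$ if and only if $LX = 0$ for all $X$, i.e. $L = \dif\pi_2\circ f_\ast = \dif(\pi_2\circ f)$ vanishes identically; by connectedness of $M^n$ this is equivalent to $\pi_2\circ f$ being constant, say $\pi_2\circ f\equiv p$ with $p\in\Q^{n_2}_{c_2}$, which in turn is exactly the condition $f(M^n)\subset\Q^{n_1}_{c_1}\times\{p\}$. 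This proves the first equivalence.

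For the second equivalence I would invoke the eigenvalue constraint recorded in Section~\ref{preliminaries}: $R$ is a nonnegative symmetric operator on the $n$-dimensional space $\TT_xM$ whose eigenvalues lie in $[0,1]$, so $\trace R = n$ forces every eigenvalue to equal $1$ and hence $R = \id_{\TT M}$. By the identity $R + \widetilde R = \id_{\TT M}$ this amounts to $\widetilde R = 0$, and the argument of the previous paragraph now applies verbatim with $\widetilde R = \widetilde L^t\widetilde L$ and $\widetilde L = \dif\pi_1\circ f_\ast$ in place of $R$ and $L$: we get $\widetilde R = 0 \iff \dif(\pi_1\circ f) = 0 \iff \pi_1\circ f$ is constant, say equal to $p\in\Q^{n_1}_{c_1}$, $\iff f(M^n)\subset\{p\}\times\Q^{n_2}_{c_2}$. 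As a self-contained variant one may bypass $R+\widetilde R=\id_{\TT M}$ by noting that, since $f$ is isometric and $\TT\Q^m$ splits orthogonally, $|X|^2 = |f_\ast X|^2 = |\widetilde LX|^2 + |LX|^2 = \dotprod{\widetilde RX}{X} + \dotprod{RX}{X}$, so $\trace R = n$ if and only if $\widetilde L \equiv 0$.

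There is no genuinely hard step here: the only points meriting a word of care are the chain-rule identity $\dif\pi_i\circ f_\ast = \dif(\pi_i\circ f)$, the passage from ``vanishing differential'' to ``constant map'' via connectedness of the domain, and the linear-algebra observation that a symmetric operator with spectrum in $[0,1]$ and maximal trace is the identity. Everything else is bookkeeping with the tensors $R$, $\widetilde R$, $L$, $\widetilde L$ already introduced in the Preliminaries, and no hypothesis on an almost complex structure on $M$ enters — consistently with the remark preceding the statement.
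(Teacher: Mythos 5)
Your argument is correct and is essentially the paper's own proof: both reduce the statement to the identities $\trace R=\|L\|^2$, $\trace\widetilde R=\|\widetilde L\|^2$ and $R+\widetilde R=\id_{\TT M}$, and then identify the vanishing of $L=\dif\pi_2\circ f_\ast$ (resp.\ $\widetilde L=\dif\pi_1\circ f_\ast$) with $f(M^n)$ lying in a slice. The only difference is cosmetic: you make explicit the connectedness step (vanishing differential of $\pi_i\circ f$ implies constancy) that the paper leaves implicit.
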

\begin{proof}
Since $R+\widetilde{R}=\id_{\TT M}$, then $\trace R+\trace\widetilde{R}=n$.  Moreover, the eigenvalues of $R$ and $\widetilde{R}$ lies in $[0,1]$, $\|L\|^2 = \trace{R}$ and $\|\widetilde{L}\|^2 =  \trace{\widetilde{R}}$. Therefore, $\trace R=0$ if, and only if, $L=0$; and $\trace R=n$ if, and only if, $\trace \widetilde{R}=0$, i.e., $\widetilde{L}=0$. By definition of $L$, $\dif\pi_2\circ f_{\ast}=0$ holds if, and only if, $f(M^n) \subset \Q^{n_1}_{c_1}\times\{p\}$ for some $p\in \Q^{n_2}_{c_2}$; and by definition of $\widetilde{L}$,  $\dif\pi_1\circ f_{\ast}=0$ holds if, and only if, $f(M^n) \subset \{p\}\times\Q^{n_2}_{c_2}$ for some $p\in  \Q^{n_1}_{c_1}$.
\end{proof}

In the next result, we discuss a necessary and sufficient condition to a minimal isometric immersion of a  Kähler manifold into $\Q^m$ be a pluriharmonic immersion.

\begin{lemma}[Pluriharmonicity property]\label{lemma-pluri-prop} Let $f:M^{2n}\to\Q^m$ be a minimal isometric immersion of a
Kähler manifold. Then $f$ is pluriharmonic if and only if the tensor $R$ satisfies the following equation
\begin{equation}\label{eq-pluri-property}
4c_1(n-1)(n-\trace{R})+(c_1+c_2)\Big((\trace{R})^2-\|R\|^2-\dotprod{RJ}{JR}\Big)=0.
\end{equation}
\end{lemma}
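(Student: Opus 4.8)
The plan is to rephrase pluriharmonicity as the vanishing of a single tensor and then compute the squared norm of that tensor from the Gauss equation \eqref{Gauss-equation}. Since $J^{2}=-\id_{\TT M}$, replacing $Y$ by $JY$ shows that the condition $\alpha(JX,Y)=\alpha(X,JY)$ for all $X,Y\in\mathfrak X(M)$ is equivalent to $\alpha(JX,JY)=-\alpha(X,Y)$ for all $X,Y\in\mathfrak X(M)$; hence $f$ is pluriharmonic if and only if the symmetric $\TT^{\perp}M$-valued bilinear form $\beta(X,Y):=\alpha(X,Y)+\alpha(JX,JY)$ vanishes identically, that is, if and only if $\|\beta\|^{2}\equiv 0$ on $M$.

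Fix $p\in M$ and an orthonormal frame $\{X_{1},\dots,X_{2n}\}$ of $\TT_{p}M$. Since $\{JX_{1},\dots,JX_{2n}\}$ is again orthonormal, expanding the square gives $\|\beta\|^{2}=2\|\alpha\|^{2}+2\sum_{i,j}\dotprod{\alpha(X_{i},X_{j})}{\alpha(JX_{i},JX_{j})}$, and replacing the orthonormal frame $\{X_{j}\}$ by $\{JX_{j}\}$ in the last contraction, together with $J^{2}=-\id_{\TT M}$ and the symmetry of $\alpha$, identifies it with $-\sum_{i,j}\dotprod{\alpha(X_{i},JX_{j})}{\alpha(X_{j},JX_{i})}$. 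Thus
\[\|\beta\|^{2}=2\Bigl(\|\alpha\|^{2}-\sum_{i,j}\dotprod{\alpha(X_{i},JX_{j})}{\alpha(X_{j},JX_{i})}\Bigr).\]

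The heart of the argument is to produce this combination from curvature. Write \eqref{Gauss-equation} as $\dotprod{\mathcal{R}(X,Y)Z}{W}=\mathcal{C}(X,Y,Z,W)+\dotprod{\alpha(X,W)}{\alpha(Y,Z)}-\dotprod{\alpha(Y,W)}{\alpha(X,Z)}$, where $\mathcal{C}$ is the explicit ambient curvature term in $c_{1}$, $c_{2}$ and $R$. Contracting with $X=W=X_{i}$, $Y=Z=X_{j}$ and summing, minimality ($\sum_{i}\alpha(X_{i},X_{i})=0$) gives $\Scal=\sum_{i,j}\mathcal{C}(X_{i},X_{j},X_{j},X_{i})-\|\alpha\|^{2}$. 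On the other hand, the Kähler identity $\mathcal{R}(X,Y)\circ J=J\circ\mathcal{R}(X,Y)$ yields $\dotprod{\mathcal{R}(X_{i},X_{j})JX_{j}}{JX_{i}}=\dotprod{\mathcal{R}(X_{i},X_{j})X_{j}}{X_{i}}$, so contracting instead with $Z=JX_{j}$, $W=JX_{i}$ and summing, and using that $\sum_{j}\alpha(X_{j},JX_{j})=0$ (indeed replacing $\{X_{j}\}$ by $\{JX_{j}\}$ turns this sum into its own negative, by orthonormality and the symmetry of $\alpha$), gives $\Scal=\sum_{i,j}\mathcal{C}(X_{i},X_{j},JX_{j},JX_{i})-\sum_{i,j}\dotprod{\alpha(X_{i},JX_{j})}{\alpha(X_{j},JX_{i})}$. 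Subtracting the two formulas for $\Scal$ eliminates the intrinsic curvature and, combined with the display above, gives $\|\beta\|^{2}=2\sum_{i,j}\bigl(\mathcal{C}(X_{i},X_{j},X_{j},X_{i})-\mathcal{C}(X_{i},X_{j},JX_{j},JX_{i})\bigr)$.

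It then remains to evaluate the two traces of $\mathcal{C}$. Using $\dotprod{(U\wedge V)Z}{W}=\dotprod{V}{Z}\dotprod{U}{W}-\dotprod{U}{Z}\dotprod{V}{W}$, the symmetry of $R$, the relations $\dotprod{X_{i}}{JX_{i}}=0$ and $\trace(RJ)=0$ (as $J$ is skew-symmetric), and $\sum_{i,j}\dotprod{RX_{i}}{JX_{j}}\dotprod{RX_{j}}{JX_{i}}=\trace(RJRJ)=-\dotprod{RJ}{JR}$, a direct computation should give
\[\sum_{i,j}\mathcal{C}(X_{i},X_{j},X_{j},X_{i})=2c_{1}(2n-1)(n-\trace R)+(c_{1}+c_{2})\bigl((\trace R)^{2}-\|R\|^{2}\bigr)\]
and
\[\sum_{i,j}\mathcal{C}(X_{i},X_{j},JX_{j},JX_{i})=2c_{1}(n-\trace R)+(c_{1}+c_{2})\dotprod{RJ}{JR},\]
whose difference is exactly half the left-hand side of \eqref{eq-pluri-property}. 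Hence $\|\beta\|^{2}=2\bigl(4c_{1}(n-1)(n-\trace R)+(c_{1}+c_{2})((\trace R)^{2}-\|R\|^{2}-\dotprod{RJ}{JR})\bigr)$, and the lemma follows. The main obstacle is the bookkeeping in this last step — in particular recognizing that every term of $\mathcal{C}$ carrying a factor $\dotprod{X_{i}}{JX_{i}}$ or $\trace(RJ)$ drops out, which is precisely what makes the $J$-twisted trace so much smaller than the ordinary one and is responsible for the factor $(n-1)$ appearing in \eqref{eq-pluri-property}.
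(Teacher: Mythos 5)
Your proposal is correct and is essentially the paper's own argument: both contract the Gauss equation twice — once $J$-twisted via the Kähler identity $\mathcal{R}(X,Y)\circ J=J\circ\mathcal{R}(X,Y)$ — use minimality (and the vanishing of $\sum_j\alpha(X_j,JX_j)$) to drop the mean-curvature terms, and identify the pluriharmonicity defect with a nonnegative square, your $\|\beta\|^2$ being exactly the paper's $\sum_i|u_i-v_i|^2$; the two curvature-trace values you state do come out as claimed. One small wording slip: the difference of the two traces is exactly the left-hand side of \eqref{eq-pluri-property}, not half of it, which is what makes it consistent with your final display $\|\beta\|^2=2\bigl(4c_1(n-1)(n-\trace R)+(c_1+c_2)((\trace R)^2-\|R\|^2-\dotprod{RJ}{JR})\bigr)$ and does not affect the equivalence.
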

\begin{proof}
At a point $p\in M$, we consider an orthonormal basis $\{X_1,\ldots,X_{2n}\}$ of $\mathrm{T}_p M$ such that $X_{2j}=JX_{2j-1}$, for $1\leq j\leq n$. Then, at this point, by the Gauss equation \eqref{Gauss-equation} we get
\begin{multline*}
\dotprod{\mathcal{R}(X_j,X)Y}{X_j} =\dotprod{\alpha(X,Y)}{\alpha(X_j,X_j)}-\dotprod{\alpha(X,X_j)}{\alpha(Y,X_j)}\\
+c_1\Big(\dotprod{X}{Y}-\dotprod{X}{X_j}\dotprod{Y}{X_j}
-\dotprod{X}{Y}\dotprod{RX_j}{X_j}\\
-\dotprod{RX}{Y}
+\dotprod{RX}{X_j}\dotprod{Y}{X_j}+\dotprod{Y}{RX_j}\dotprod{X}{X_j}\Big)\\
+(c_1+c_2)\Big(\dotprod{RX_j}{X_j}\dotprod{RX}{Y}-\dotprod{RX}{X_j}\dotprod{RX_j}{Y}\Big).
\end{multline*}
Since  $f$ is minimal, for $X=Y=X_i$, summing in $j$ from $1$ to $2n$, we have
\begin{multline}\label{exp-ricci-1}
\Ric(X_i) =
-\sum_{j=1}^{2n}\|\alpha(X_i,X_j)\|^2\\
+c_1\Big(2n-1-\trace{R}-2(n-1)\dotprod{RX_i}{X_i}\Big)\\
+(c_1+c_2)\Big(\dotprod{RX_i}{X_i}\trace{R}-\|RX_i\|^2\Big),
\end{multline}
and, similarly for $X=Y=JX_i$,
\begin{multline}\label{exp-ricci-2}
\Ric(JX_i) =
-\sum_{j=1}^{2n}\|\alpha(JX_i,X_j)\|^2\\
+c_1\Big(2n-1-\trace{R}-2(n-1)\dotprod{RJX_i}{JX_i}\Big)\\
+(c_1+c_2)\Big(\dotprod{RJX_i}{JX_i}\trace{R}-\|RJX_i\|^2\Big).
\end{multline}

On the other hand, the Kähler structure on $M$ implies that
\[\dotprod{\mathcal{R}(X_j,X_i)X_i}{X_j} = \dotprod{\mathcal{R}(X_j,X_i)JX_i}{JX_j},\]
and therefore, by the Gauss equation, summing in $j$ from $1$ to $2n$, we get
\begin{multline*}
\Ric(X_i) =
\sum_{j=1}^{2n} \dotprod{\alpha(X_i,JX_i)}{\alpha(X_j,JX_j)}
-\sum_{j=1}^{2n} \dotprod{\alpha(X_i,JX_j)}{\alpha(X_j,JX_i)}\\
+c_1\Big(1-\dotprod{RX_i}{X_i}-\dotprod{RJX_i}{JX_i}\Big)\\
+(c_1+c_2)\Big(\dotprod{RJX_i}{JRX_i} -  \dotprod{RX_i}{JX_i}\trace{JR}\Big).
\end{multline*}
However, we notice that the first term of expression above is equal to zero, because of $X_{2j}=JX_{2j-1}$, for $1\leq j\leq n$. Moreover, since $R$ is symmetric and $J$ anti-symmetric, we have $\trace{JR} =0$. Thus,
\begin{multline}\label{exp-ricci-3}
\Ric(X_i) =
-\sum_{j=1}^{2n} \dotprod{\alpha(X_i,JX_j)}{\alpha(X_j,JX_i)}\\
+c_1\Big(1-\dotprod{RX_i}{X_i}-\dotprod{RJX_i}{JX_i}\Big)
+(c_1+c_2)\dotprod{RJX_i}{JRX_i}.
\end{multline}

Consider $E =\bigoplus_{j=1}^{2n} \TT_p^{\perp}M$ endowed with the standard inner product. We set
\begin{align*}
  u_i &= (\alpha(X_i ,JX_1),\alpha(X_i ,JX_2),\ldots, \alpha(X_i ,JX_{2n})),\\
  v_i &= (\alpha(X_1,JX_i),\alpha(X_2,JX_i),\ldots, \alpha(X_{2n},JX_i)).
\end{align*}
Since $\Ric(JX,JY)=\Ric(X,Y)$, for all $X, Y \in \mathfrak{X}(M)$ and $|u_i-v_i|^2 = |u_i|^2+|v_i|^2-2\dotprod{u_i}{v_i}$, by equations \eqref{exp-ricci-1}, \eqref{exp-ricci-2} and \eqref{exp-ricci-3} we have
\begin{multline*}
|u_i-v_i|^2  = 2c_1\Big(2(n-1)-\trace{R}-(n-2)\big(\dotprod{RX_i}{X_i}+\dotprod{RJX_i}{JX_i}\big)\Big)\\
   +(c_1+c_2)\Big(\big(\dotprod{RX_i}{X_i}+\dotprod{RJX_i}{JX_i}\big)\trace{R}\\
   -\|RX_i\|^2-\|RJX_i\|^2-2\dotprod{RJX_i}{JRX_i}\Big),
\end{multline*}
for $1\leq i\leq n$. Then,
\begin{equation*}%\label{pluri-prop}
\frac{1}{2}\sum_{i=1}^{2n} |u_i-v_i|^2 = 4c_1(n-1)(n-\trace R)+(c_1+c_2)\Big((\trace R)^2-\|R\|^2-\dotprod{RJ}{JR}\Big),
\end{equation*}
that is, $|u_i-v_i|^2=0$ for all $1\leq i\leq n$ if, and only if, equation \eqref{eq-pluri-property} holds. Therefore, observing that $|u_i-v_i|^2=0$ for all $1\leq i\leq n$ if, and only if, $f$ is a pluriharmonic immersion, we conclude our assertion.
\end{proof}

\begin{remark} We notice that since $R+\widetilde{R}=\id_{\TT M}$, we obtain an analogous Pluriharmonicity property's Lemma for the tensor $\widetilde{R}$:
\begin{equation*}
4c_2(n-1)(n-\trace{\widetilde{R}})+(c_1+c_2)\Big((\trace{\widetilde{R}})^2-\|\widetilde{R}\|^2-\dotprod{\widetilde{R}J}{J\widetilde{R}}\Big)=0.
\end{equation*}
\end{remark}

\begin{remark}\label{lawn-remark} By the approach used in \cite{LawnRoth}, we show that the pluriharmonic property is given by
\[c_1\Big((\trace{R})^2-\|R\|^2-\dotprod{RJ}{JR}\Big)+c_2\Big((\trace{\widetilde{R}})^2-\|\widetilde{R}\|^2-\dotprod{\widetilde{R}J}{J\widetilde{R}}\Big)=0,\]
which is equivalent to the one provided by Lemma \ref{lemma-pluri-prop}, by the relation $R+\widetilde{R}=\id_{\TT M}$. Moreover, the Pluriharmonic property's Lemma can be generalised for minimal isometric immersions of a Kähler manifolds into multiproducts of space forms $\Q_{c_1}^{n_1}\times\cdots\times\Q_{c_k}^{n_k}$. In this case, the pluriharmonic property is given by
\[\sum_{j=1}^{k}c_j\Big((\trace{R_j})^2-\|R_j\|^2-\dotprod{R_jJ}{JR_j}\Big)=0,\]
where $R_j$ is the $f_j$ symmetric tensor that appear in  \cite{LawnRoth}. In particular, for the case $\Q^{n_1}_{c_1}\times\Q^{n_2}_{c_2}$, we have that $R_1=\widetilde{R}$ and $R_2=R$.
\end{remark}

As a consequence of Pluriharmonicity property's Lemma, we analyse the obstruction conditions to the existence of pluriharmonic immersions of Kähler manifolds into $\Q^{m-1}_c\times\R$. %Theses kinds of problems were also studied by Dajczer, Gromoll and Rodríguez in the cases of space forms \cite{DGGaussMap,DajczerRodriguez86}. In the spherical and hyperbolic case, only surfaces can be immersed under assumption of pluriharmonicity.

\begin{theorem}\label{thm-obstruction-QmxR}
Let $f:M^{2n}\to\Q^{m-1}_c\times\R$ be an isometric immersion of a Kähler manifold, with $c \neq 0$. Assume that
\begin{itemize}
\item[$\mathrm{i)}$] either $c < 0$ and $f$ is minimal;
\item[$\mathrm{ii)}$] or $c > 0$ and $f$ is pluriharmonic.
\end{itemize}
Then $n = 1$.
\end{theorem}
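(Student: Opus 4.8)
The plan is to exploit the Pluriharmonicity property (Lemma \ref{lemma-pluri-prop}) with $c_1 = c$ and $c_2 = 0$, so that the relevant equation becomes
\begin{equation*}
4c(n-1)(n-\trace{R}) + c\Big((\trace{R})^2 - \|R\|^2 - \dotprod{RJ}{JR}\Big) = 0.
\end{equation*}
In case ii) $f$ is pluriharmonic by hypothesis, so this identity holds directly; in case i) $f$ is minimal, and I would first note that Ferreira--Rigoli--Tribuzy's result (pluriharmonic $=$ minimal for Kähler manifolds in locally symmetric spaces of noncompact type, which covers $\mathbb{H}^{m-1}_c\times\R$) promotes minimality to pluriharmonicity, so the same identity applies. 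Dividing by $c \neq 0$, the task reduces to showing that the purely algebraic/tensorial quantity
\begin{equation*}
\Phi(R) := 4(n-1)(n-\trace{R}) + (\trace{R})^2 - \|R\|^2 - \dotprod{RJ}{JR}
\end{equation*}
vanishes only when $n = 1$, using that $R$ is a symmetric nonnegative operator with eigenvalues in $[0,1]$, that $\trace R \in [0,n]$, and — crucially — that $R$ interacts with the parallel complex structure $J$.

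The key structural input I would extract is that $R$ commutes with $J$. This should follow from the Codazzi equation \eqref{Codazzi-equation} together with the Kähler condition and minimality: with $c_2 = 0$ the Codazzi equation reads $(\nabla_X^\perp\alpha)(Y,Z) - (\nabla_Y^\perp\alpha)(X,Z) = c\big(\dotprod{X}{Z}SY - \dotprod{Y}{Z}SX + \dotprod{RY}{Z}SX - \dotprod{RX}{Z}SY\big)$, and comparing this with the pluriharmonicity relation $\alpha(X,JY) = \alpha(JX,Y)$ — which forces the left-hand side to behave well under $J$ — should yield that $RJ = JR$ (equivalently, the $(1,1)$-tensor $R$ is compatible with the complex structure, so its eigenspaces are $J$-invariant). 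Once $RJ = JR$, one has $\dotprod{RJ}{JR} = \trace(RJ(JR)^t) = -\trace(R J J R) = \trace(R^2) = \|R\|^2$, so the two negative terms combine and $\Phi(R) = 4(n-1)(n-\trace R) + (\trace R)^2 - 2\|R\|^2$.

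Now I would diagonalize: since $R$ is symmetric and $J$-invariant, choose an orthonormal basis $\{X_1, JX_1, \ldots, X_n, JX_n\}$ of eigenvectors, with $R X_i = \lambda_i X_i$ and $R(JX_i) = \lambda_i JX_i$ (the eigenvalue is shared on each $J$-plane), $\lambda_i \in [0,1]$. Then $\trace R = 2\sum_i \lambda_i$ and $\|R\|^2 = 2\sum_i \lambda_i^2$, so
\begin{equation*}
\Phi(R) = 4(n-1)\Big(n - 2\sum_{i=1}^n \lambda_i\Big) + 4\Big(\sum_{i=1}^n \lambda_i\Big)^2 - 4\sum_{i=1}^n \lambda_i^2.
\end{equation*}
Writing $t = \sum_i \lambda_i \in [0,n]$ and using $\sum \lambda_i^2 \le \big(\sum\lambda_i\big)^2$ is the wrong direction; instead I expect the correct bound to come from $\lambda_i \le 1$, which gives $\sum \lambda_i^2 \le \sum \lambda_i = t$, hence $\Phi(R) \ge 4(n-1)(n-2t) + 4t^2 - 4t = 4\big(t^2 - (2n-1)t + n(n-1)\big) = 4(t - (n-1))(t - n)$. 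On $[0,n]$ the factor $(t-n) \le 0$, and $(t-(n-1))$ ranges over sign, so I would need a complementary lower estimate; more promising is to also use $\lambda_i \ge 0$ together with a careful case analysis, or to return to the sum form and show directly that $\Phi(R) = \sum_i\big[\text{nonnegative}\big] + (\text{term forcing } n=1)$. The main obstacle — and where I would spend the real effort — is establishing $RJ = JR$ cleanly from Codazzi plus pluriharmonicity, and then pinning down the exact sign of $\Phi$ so that $\Phi(R) = 0$ forces either $n = 1$ or a configuration (e.g. $\trace R = 0$ or $\trace R = 2n$, i.e. $f$ maps into a slice by Proposition \ref{prop-slice}) that is then separately excluded because a slice $\mathbb{Q}^{m-1}_c$ with $c \neq 0$ already only admits Kähler minimal immersions of surfaces by the Dajczer--Rodríguez theorem. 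Assembling these two exclusions — the interior algebraic case and the slice boundary cases — into "$n=1$" is the final step.
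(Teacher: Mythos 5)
Your reduction to the pluriharmonic case (case ii directly, case i via Ferreira--Rigoli--Tribuzy since $\h^{m-1}_c\times\R$ is locally symmetric of non-compact type) matches the paper, and your specialization of Lemma \ref{lemma-pluri-prop} to $c_1=c$, $c_2=0$ is correct. But from that point on there is a genuine gap: you never use the one feature of the ambient space that makes the theorem true, namely that the second factor is one-dimensional. For $\Q^{m-1}_c\times\R$ the tensor $R$ is explicitly the rank-one operator $RX=\dotprod{X}{\partial_t^\top}\partial_t^\top$, so that $\trace R=\|\partial_t^\top\|^2\le 1$, $\|R\|^2=(\trace R)^2$ and $\dotprod{RJ}{JR}=0$ (since $\dotprod{\partial_t^\top}{J\partial_t^\top}=0$); the identity of Lemma \ref{lemma-pluri-prop} then collapses to $4(n-1)\big(n-\|\partial_t^\top\|^2\big)=0$, and $\|\partial_t^\top\|^2\le 1$ forces $n=1$. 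That three-line computation is the entire proof in the paper, and it is exactly the input your general algebraic analysis of $\Phi(R)=0$ is missing.

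Your substitute for it cannot be repaired as stated. First, the ``key structural input'' $RJ=JR$ is false in this setting unless $\partial_t^\top=0$: for the rank-one $R$ above, $RJ\partial_t^\top=0$ while $JR\partial_t^\top=\|\partial_t^\top\|^2J\partial_t^\top$, and no Codazzi argument can produce a commutation that fails pointwise; the paper never uses (nor needs) such a fact. Second, even if one grants $RJ=JR$, the statement you are trying to prove --- that $\Phi(R)=0$ forces $n=1$ for a general symmetric $R$ with eigenvalues in $[0,1]$ --- is simply not true: $R=\id_{\TT M}$ gives $\trace R=2n$, $\|R\|^2=2n$, $\dotprod{RJ}{JR}=2n$ and hence $\Phi(R)=0$ for every $n$, which is why your eigenvalue estimate stalls on $[n-1,n]$. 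The case $\trace R$ large is excluded here only because the second factor is $\R$, i.e.\ only through the rank-one form of $R$; handling it via Proposition \ref{prop-slice} plus Dajczer--Rodr\'iguez, as you sketch at the end, is both unnecessary and left unassembled. So the proposal, as written, does not close the argument, whereas inserting the explicit expression of $R$ for $\Q^{m-1}_c\times\R$ immediately does.
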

\begin{proof}
 Firstly, since $\mathbb{H}_c^{m-1}\times \R  $ is a locally symmetric Riemannian manifold of non-compact type follows from \cite[Proposition 1]{FerreiraRigoliTribuzy93} that $f$ is also a pluriharmonic immersion. Moreover, for $\Q^m = \Q^{m-1}_c\times\R$ we have that
$RX= \dotprod{X}{\partial_t^\top}\partial_t^\top$, where $\partial_t^\top$ is the projection of the unit vertical vector $\partial_t$ (corresponding to the factor $\R$) onto $\mathrm{T}M$. By Pluriharmoniciy property's Lemma, we have
\[4(n-1)\big(n-\|\partial_t^\top\|^2\big)=0,\]
that is, either $n=1$ or $\|\partial_t^\top\|^2=n$. Since $\|\partial_t^\top\|^2\leq 1$, in both cases we get $n=1$.
\end{proof}

\begin{remark}
We point out that Theorem \ref{thm-obstruction-QmxR} was also obtained by de Almeida in her thesis \cite[Theorem 3.1]{Kelly}, using similar methods. %in an unpublished result.
%é bom falar as diferenças
\end{remark}
\begin{remark}
We notice that Theorem \ref{thm-obstruction-QmxR} can be extended for an isometric immersion of a Kähler manifold $M^{2n}$ into a warped product manifold $I\times_{\rho}\mathbb{Q}^{m-1}_c$ endowed with the metric $\dif s^2 = \dif t^2 +\rho(t)^2 \dif\theta^2$, where $I\subset\R$ is an interval, $\rho: I\to \R$ is a non-constant positive smooth function and $\dif\theta^2$ denotes the metric of $\mathbb{Q}^{m-1}_c$. Indeed, by the Gauss equation (cf.  \cite{ChenXiang,ribeiro2019}), we compute the pluriharmonicity property by
\begin{equation}\label{casowarped}
4(n-1)\big(n\lambda(t)-\|\partial_t^\top\|^2\mu(t)\big)=0,
\end{equation}
where $\lambda(t) =\dfrac{c-\rho'(t)^2}{\rho(t)^2} \,\,\, \mbox{and}\,\,\, \mu(t)=\dfrac{c-\rho'(t)^2}{\rho(t)^2}+\dfrac{\rho''(t)}{\rho(t)}.$ However, we observe that when either $\rho''(t)\geq 0$ and $c\leq 0$, or $\rho''(t)\leq 0$ and $c>\rho'(t)^2$, then
\[n\lambda(t)-\|\partial_t^\top\|^2\mu(t)=\big(n-\|\partial_t^\top\|^2\big)\dfrac{c-\rho'(t)^2}{\rho(t)^2}-\|T\|^2\dfrac{\rho''(t)}{\rho(t)}= 0\]
if, and only if, either $\rho''(t)= 0$ and $\|\partial_t^\top\|^2=n$, that is, if $\{(t,\rho(t)) : t\in I\}\subset\R^2$ is a line and $n=1$, since $\|\partial_t^\top\|^2\leq 1$. Therefore, if $f:M^{2n}\to I\times_{\rho}\mathbb{Q}^{m-1}_c$ is a pluriharmonic immersion of a Kähler manifold such that either $\rho''(t)\geq 0$ and $c\leq 0$, or $\rho''(t)\leq 0$ and $c>\rho'(t)^2$, the pluriharmonicity property \eqref{casowarped} implies that $n=1$.
\end{remark}

\begin{corollary}\label{3.8}
Let $f:M^{2n}\to\Q^{n_1}_{c}\times\Q^{n_2}_{-c}$ be a pluriharmonic immersion of a Kähler manifold, with $c\neq 0$. Then either $\trace R = n$ or $n=1$.
\end{corollary}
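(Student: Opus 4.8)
The plan is to apply the Pluriharmonicity property (Lemma~\ref{lemma-pluri-prop}) directly, exploiting the fact that the two space forms carry opposite curvatures. Since $f$ is assumed pluriharmonic from the outset, there is no need to first promote minimality to pluriharmonicity as in Theorem~\ref{thm-obstruction-QmxR}; one simply feeds the hypotheses into equation~\eqref{eq-pluri-property}.

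Concretely, for the target $\Q^{n_1}_{c}\times\Q^{n_2}_{-c}$ we have $c_1 = c$ and $c_2 = -c$, so that $c_1 + c_2 = 0$. Substituting into \eqref{eq-pluri-property}, the entire term carrying the factor $(c_1+c_2)$ — namely $(c_1+c_2)\big((\trace R)^2 - \|R\|^2 - \dotprod{RJ}{JR}\big)$ — vanishes identically, and we are left with
\[
4c(n-1)(n-\trace R) = 0.
\]
Because $c \neq 0$, this forces $(n-1)(n-\trace R) = 0$, i.e.\ either $n = 1$ or $\trace R = n$, which is exactly the claim.

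There is essentially no obstacle here: the corollary is a one-line specialization of Lemma~\ref{lemma-pluri-prop} to the ``dual'' curvature situation $c_2 = -c_1$, in which the quadratic-in-$R$ contribution drops out and only the linear term survives. The single point worth flagging is that, in contrast to the $\Q^{m-1}_c\times\R$ case treated in Theorem~\ref{thm-obstruction-QmxR}, here $\trace R = n$ is a genuine alternative — by Proposition~\ref{prop-slice} it corresponds to $f$ factoring through a slice $\{p\}\times\Q^{n_2}_{-c}$ — so one cannot collapse the dichotomy further to $n=1$ without additional hypotheses.
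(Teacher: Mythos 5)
Your proposal is correct and is essentially the paper's own proof: both simply substitute $c_1=c$, $c_2=-c$ into equation~\eqref{eq-pluri-property} so the $(c_1+c_2)$ term drops and $4c(n-1)(n-\trace R)=0$ forces the stated dichotomy (pluriharmonicity gives minimality, so Lemma~\ref{lemma-pluri-prop} applies). Your closing remark about $\trace R=n$ corresponding to a slice via Proposition~\ref{prop-slice} is a nice, correct addition but not needed for the statement.
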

\begin{proof}
Since $c_1+c_2 =0$, it follows directly of Pluriharmonicity property's Lemma that $4c(n-1)(n-\trace R)=0$.
\end{proof}

%\begin{corollary}Let $f:M^{2n}\to\Q^{m}_{c}\times\Q^{m}_{c}$ be a pluriharmonic isometric immersion of a Kähler manifold, with $c\neq 0$. If $\trace R \neq n$, then $n=1$.
%\end{corollary}
%\begin{proof}
%Firstly note that $R=\widetilde{R}$. Then, by Remark \ref{lawn-remark}, we have $(\trace{R})^2-\|R\|^2-\dotprod{RJ}{JR}=0$, and by Pluriharmonicity property's Lemma, we get our assertion.
%\end{proof}

\begin{corollary}\label{NonExistence2}
Let $f:M^{2n}\to\Q^{m}$ be a pluriharmonic immersion of a Kähler manifold. Assume that $RJ+JR =0$ $($resp. $RJ+JR =2J)$. Then either $c_1=0$ or $n=1$ $($resp. $c_2=0$ or $n=1)$.
\end{corollary}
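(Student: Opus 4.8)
The plan is to substitute the algebraic hypothesis on $R$ directly into the Pluriharmonicity property of Lemma~\ref{lemma-pluri-prop}, after first exploiting that hypothesis to determine $R$ exactly. Recall that since $f$ is pluriharmonic it is in particular minimal, so equation~\eqref{eq-pluri-property} is available.

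Consider first the case $RJ+JR=0$, that is, $RJ=-JR$. Multiplying this identity on the right by $J$ and using $J^2=-\id_{\TT M}$ gives $R=JRJ$; taking the trace and using its cyclicity, $\trace R=\trace(JRJ)=\trace(RJ^2)=-\trace R$, hence $\trace R=0$. Since $R$ is a non-negative symmetric operator, vanishing of its trace forces all of its eigenvalues to vanish, so $R=0$. Consequently $\trace R=\|R\|^2=\dotprod{RJ}{JR}=0$, and \eqref{eq-pluri-property} reduces to $4c_1(n-1)n=0$. As $n\geq 1$, we conclude $c_1=0$ or $n=1$.

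For the case $RJ+JR=2J$, I would pass to the complementary tensor $\widetilde R=\id_{\TT M}-R$, for which $R+\widetilde R=\id_{\TT M}$. Then $\widetilde R J+J\widetilde R=2J-(RJ+JR)=0$, so the argument just given applies verbatim to $\widetilde R$ and yields $\widetilde R=0$, i.e.\ $R=\id_{\TT M}$. Feeding $\trace\widetilde R=\|\widetilde R\|^2=\dotprod{\widetilde R J}{J\widetilde R}=0$ into the analogue of the Pluriharmonicity property for $\widetilde R$ recorded in the Remark after Lemma~\ref{lemma-pluri-prop} gives $4c_2(n-1)n=0$, hence $c_2=0$ or $n=1$. (Alternatively one substitutes $\trace R=2n$, $\|R\|^2=2n$, $\dotprod{RJ}{JR}=\dotprod{J}{J}=2n$ into \eqref{eq-pluri-property} directly.)

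I do not expect a real obstacle. The single nonroutine step is the implication ``$RJ=-JR$ together with $R\geq 0$ implies $R=0$'': either via the trace computation above, or by noting that $J$ conjugates $R$ into $-R$ (since $J^{-1}=-J$), so $R$ and $-R$ are similar and hence share the same spectrum, which is contained in $[0,1]$ and in $[-1,0]$ simultaneously and therefore reduces to $\{0\}$. Everything else is plain substitution into the Pluriharmonicity property.
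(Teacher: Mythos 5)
Your proof is correct and follows essentially the paper's route: derive trace information from the (anti)commutation hypothesis and substitute into the Pluriharmonicity property \eqref{eq-pluri-property} (and its $\widetilde{R}$ analogue) to get $4c_1n(n-1)=0$, resp.\ $4c_2n(n-1)=0$. The only cosmetic difference is that you use non-negativity of $R$ (resp.\ $\widetilde{R}$) to conclude $R=0$ (resp.\ $R=\id_{\TT M}$), whereas the paper only extracts $\trace R=0$ together with $\dotprod{RJ}{JR}=-\|R\|^2$ so that the $(c_1+c_2)$ term cancels without pinning down $R$; both steps are valid and lead to the same reduction.
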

\begin{proof} Since $RJ+JR =0$ if, and only if, $J^{-1}RJ+R =0,$ then $\trace{R} =0.$ Moreover, $\dotprod{RJ}{JR}=-\dotprod{RJ}{RJ}=-\|R\|^2$. By Pluriharmonicity property's Lemma, we have  $4c_1(n-1)n =0$, i.e., either $c_1 = 0$ or $n=1$.
Analogously, if $RJ+JR =2J$ then $\trace{R} =2n$ and $\trace{\widetilde{R}} =0$. Thus, by Pluriharmonicity property's Lemma for $\widetilde{R }$, we have  $4c_2(n-1)n =0$, i.e., either $c_2 = 0$ or $n=1$.
\end{proof}
\begin{remark} We observe that if $RJ+JR=0$ then $\trace{R}=0$, and by Proposition \ref{prop-slice}, we have $f(M^{2n})\subset\Q_{c_1}^{n_1}\times \{p\}$, for some $p\in  \Q^{n_2}_{c_2}$. By Corollary \ref{NonExistence2}, either $c_1=0$ and $f(M^{2n})\subset \R^{n_1}\times \{p\}$, or $n=1$ and $f(M^2)\subset\Q_{c_1}^{n_1}\times \{p\},$ that is, $f$ can be seen as isometric immersion into $\Q^{n_1}_{c_1}$ and then we recover a Dajczer-Rodríguez result presented in \cite{DajczerRodriguez86}.
\end{remark}

\section{Curvature estimates}\label{curvature-estimates}

The goal of this section  is to study upper bounds of the Ricci and scalar curvatures of Kähler manifolds, when we suppose the existence of minimal isometric immersions of this manifolds into some product of space forms.

In a classical work about minimal isometric immersions, Takahashi proved that the existence of minimal isometric immersion $f$ of an arbitrary Riemannian manifold $M^n$ into a space form $\Q^m_c$, $n\geq 2$, imposes an upper bound of the Ricci curvature of $M$, namely,
%space forms $\Q^m_c$, the Gauss equation provides naturally upper and lower bounds for given a minimal isometric immersion of an arbitrary Riemannian manifold $M^n$, $n\geq 2$, namely,
\begin{equation*}
\frac{n-1}{n}\big(cn-\|\alpha\|^2\big)\leq
\Ric \leq c(n-1),
\end{equation*}
%and therefore,
%\begin{equation*}
%  (n-1)\big(cn-\|\alpha\|^2\big)\leq \Scal \leq cn(n-1).
%\end{equation*}
%\begin{equation}\label{bound}
%\Ric\leq (n-1)c \text{ \ and \ } \Scal \leq n(n-1) c,
%\end{equation}
where $\|\alpha\|^2$ denotes the norm square of the second fundamental form \cite[Theorem 1]{Takahashi}. The equality case holds if and only if $f(M)$ is a totally geodesic submanifold of $\Q^m_c$. Dajczer and Rodríguez proved that the upper bound of the Ricci curvature of $M$ is more restrictive for a minimal isometric immersion $f$ of a Kähler manifold $M^{2n}$ into $\s^m_c$. In this case, they showed that $\Ric \leq cn$, with equality implying that $f$ has parallel second fundamental form \cite[Theorem 1.2]{DajczerRodriguez86}.

In order to improve the upper bounds of the Ricci and scalar curvatures of minimal Kähler submanifolds into some products space forms, we study firstly a general upper bound of the scalar curvature of those submanifolds in $\Q^{n_1}_{c_1}\times\Q^{n_2}_{c_2}$, given in terms of the tensor $R$. For this purpose, we said that an isometric immersion $f$ of a Kähler manifold $M^{2n}$ is an \emph{anti-pluriharmonic} immersion if the second fundamental form of $f$ satisfies
\[\alpha(X,JY) = -\alpha(JX,Y), \text{ \ for all \ } X, Y \in \mathfrak{X}(M),\]
or equivalently, if the Weingarten operator $A_{\xi}$ of $f$ commutes with the almost complex structure $J$:
\[A_{\xi}J=JA_{\xi}, \text{ \ for any \ }\xi\in \mathfrak{X}(M)^{\perp}.\]

Anti-pluriharmonic immersions into Euclidean space were firstly studied by Rettberg in \cite{Rettberg} and by Ferus in \cite{Ferus1980}, where was proved that anti-pluriharmonic immersions into $\Q^m_c$ have parallel second fundamental forms. We notice that, in an analogous way of holomorphic immersions, if the target space is also a Kähler manifold, then anti-holomorphic isometric immersions are anti-pluriharmonic immersions.
%
%This fact can be extended in the following sense:

%\begin{lemma}
%Let $f:M^{2n}\to\Q^m$ be an isometric immersion of a Kähler manifold with anti-pluriharmonic property. Then, the covariant derivative of $\alpha$ can be written in terms of the tensor $R$ and $S.$ That is, $$\Big(\nabla_X^{\perp} \alpha\Big)(Y,Z)=F(R,S)(X,Y,Z),$$ where
%\end{lemma}
%When $f$ lies in a slice we have that $R=0$ or $I$ and $S=0,$ and since
%$F(I,0) =F(0,0) =0$ follows that in any case $f$ has second fundamental form  parallel, recovering \cite{Ferus1980}.

%This approach will provides
%Indeed, we obtain a better upper bound for the Ricci curvature of minimal Kähler submanifolds in $\mathbb{S}_c^{m-1}\times\R$.

\begin{lemma}\label{lemma-ricci}
Let $f:M^{2n}\to\mathbb{Q}^m$ be a minimal isometric immersion of a Kähler manifold. Then the scalar curvature of $M$ satisfies
\[\Scal \leq 2nc_1(n-\trace{R})+\dfrac{(c_1+c_2)}{2}\Big((\trace{R})^2-\|R\|^2+\dotprod{RJ}{JR} \Big).\]
Moreover, the equality holds if, and only if, $f$ is an anti-pluriharmonic immersion.
%$f(M)\subset \mathbb{Q}_{c_1}^{n_1}\times\{t\}$ for some $t\in\mathbb{Q}_{c_2}^{n_2}$, and
%that \textcolor{red}{the second fundamental form is parallel.}
%In particular, if $c_1+c_2\geq 0,$ then
%\[\Scal \leq 2nc_1(n-\trace{R})+\dfrac{(c_1+c_2)}{2}(\trace{R})^2.\]
\end{lemma}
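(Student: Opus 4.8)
The plan is to imitate the proof of Lemma~\ref{lemma-pluri-prop} essentially verbatim, replacing the quantity $|u_i-v_i|^2$ that detects pluriharmonicity there by $|u_i+v_i|^2$, which detects anti-pluriharmonicity. I fix a point $p\in M$ and the adapted orthonormal basis $\{X_1,\dots,X_{2n}\}$ of $\TT_pM$ with $X_{2j}=JX_{2j-1}$, and I keep the vectors $u_i,v_i$ introduced in that proof. The identities \eqref{exp-ricci-1}, \eqref{exp-ricci-2} and \eqref{exp-ricci-3} were derived using only the Gauss equation, minimality and the Kähler condition, so they are still available; reading them through $|u_i|^2=\sum_j\|\alpha(X_i,JX_j)\|^2=\sum_j\|\alpha(X_i,X_j)\|^2$ (using that $\{JX_j\}_j$ is again an orthonormal basis of $\TT_pM$), $|v_i|^2=\sum_j\|\alpha(JX_i,X_j)\|^2$ and $\dotprod{u_i}{v_i}=\sum_j\dotprod{\alpha(X_i,JX_j)}{\alpha(X_j,JX_i)}$, they express $|u_i|^2$, $|v_i|^2$ and $\dotprod{u_i}{v_i}$ in closed form in terms of $\Ric$ and $R$.

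Next I would expand $|u_i+v_i|^2=|u_i|^2+|v_i|^2+2\dotprod{u_i}{v_i}$, substitute those three expressions, and use $\Ric(JX_i)=\Ric(X_i)$; this produces, for every $i$, an identity of the form $|u_i+v_i|^2=-4\Ric(X_i)+c_1(\cdots)+(c_1+c_2)(\cdots)$ whose right-hand side depends only on $\dotprod{RX_i}{X_i}$, $\dotprod{RJX_i}{JX_i}$, $\|RX_i\|^2$, $\|RJX_i\|^2$, $\dotprod{RJX_i}{JRX_i}$, $\trace R$ and $n$ — the very same ingredients as in the pluriharmonic case, only with the signs of the cross terms reversed. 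Summing over $i=1,\dots,2n$ and using the elementary trace identities $\sum_i\dotprod{RX_i}{X_i}=\sum_i\dotprod{RJX_i}{JX_i}=\trace R$ (conjugation invariance of the trace, since $J$ is orthogonal), $\sum_i\|RX_i\|^2=\sum_i\|RJX_i\|^2=\|R\|^2$ and $\sum_i\dotprod{RJX_i}{JRX_i}=\dotprod{RJ}{JR}$ gives
\[
\tfrac12\sum_{i=1}^{2n}|u_i+v_i|^2=-2\,\Scal+4nc_1\big(n-\trace R\big)+(c_1+c_2)\Big((\trace R)^2-\|R\|^2+\dotprod{RJ}{JR}\Big).
\]
Since the left-hand side is $\ge 0$, rearranging yields exactly the asserted bound on $\Scal$.

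Equality then holds if and only if $\sum_i|u_i+v_i|^2=0$, i.e. $u_i=-v_i$ for every $i$, i.e. $\alpha(X_i,JX_j)=-\alpha(X_j,JX_i)=-\alpha(JX_i,X_j)$ for all $i,j$; by bilinearity and the symmetry of $\alpha$ this is the same as $\alpha(X,JY)=-\alpha(JX,Y)$ for all $X,Y\in\mathfrak{X}(M)$, that is, $f$ is anti-pluriharmonic. I do not expect a genuine conceptual obstacle, since the whole argument is forced by the proof of Lemma~\ref{lemma-pluri-prop}; the only delicate point is the bookkeeping in the summation — keeping the overall factor $\tfrac12$ straight and correctly applying the conjugation-invariance trace identities to the $J$-twisted terms $\dotprod{RJX_i}{JX_i}$ and $\dotprod{RJX_i}{JRX_i}$.
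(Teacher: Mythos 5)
Your proposal is correct and takes essentially the same route as the paper's own proof: the same adapted basis and vectors $u_i,v_i$, the identity $|u_i+v_i|^2=-4\Ric(X_i)+2A_i+B_i+C_i$ extracted from \eqref{exp-ricci-1}--\eqref{exp-ricci-3} together with $\Ric(JX_i)=\Ric(X_i)$, summation over $i$ via the trace identities, nonnegativity of the left-hand side, and the equality case $u_i=-v_i$ characterising anti-pluriharmonicity. Your bookkeeping is right, and your summed formula with the sign $+\dotprod{RJ}{JR}$ is the one consistent with the lemma's statement.
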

\begin{proof}
At a point $p\in M$, we consider an orthonormal basis $\{X_1,\cdots,X_{2n}\}$ of $\mathrm{T}_p M$ such that $X_{2j}=JX_{2j-1}$, for $1\leq j\leq n$.  Consider $E =\bigoplus_{j=1}^{2n} \TT_p^{\perp}M$ endowed with the standard inner product. We set
\begin{align*}
  u_i &= (\alpha(X_i ,JX_1),\alpha(X_i ,JX_2),\ldots, \alpha(X_i ,JX_{2n})),\\
  v_i &= (\alpha(X_1,JX_i),\alpha(X_2,JX_i),\ldots, \alpha(X_{2n},JX_i)).
\end{align*}
Since $\Ric(JX,JY)=\Ric(X,Y)$, for all $X, Y \in \mathfrak{X}(M)$,  by equations \eqref{exp-ricci-1}, \eqref{exp-ricci-2} and \eqref{exp-ricci-3}, and the Parallelogram identity, we get that
\begin{equation}\label{lemma-eq-ricci}
|u_i+v_i|^2 = -4\Ric(X_i)+2A_i+B_i+C_i,
\end{equation}
for $1\leq i\leq 2n$, where the coefficients $A_i$, $B_i$ and $C_i$ are given by
\begin{align*}
A_i  &= c_1\Big(1-\dotprod{RX_i}{X_i}-\dotprod{RJX_i}{JX_i}\Big)
        +(c_1+c_2)\dotprod{RJX_i}{JRX_i},\\
B_i  &= c_1\Big((2n-1)-\trace{R}-2(n-1)\dotprod{RX_i}{X_i}\Big)\\
        &\quad\quad\quad\quad\quad\quad\quad\quad\quad\quad\quad\quad\quad\quad
        +(c_1+c_2)\Big(\dotprod{RX_i}{X_i}\trace{R}-\|RX_i\|^2\Big),\\
C_i  &=c_1\Big((2n-1)-\trace{R}-2(n-1)\dotprod{RJX_i}{JX_i}\Big)\\
        &\quad\quad\quad\quad\quad\quad\quad\quad\quad\quad\quad\quad\quad\quad
        +(c_1+c_2)\Big(\dotprod{RJX_i}{JX_i}\trace{R}-\|RJX_i\|^2\Big).
\end{align*}
Thus, by equation \eqref{lemma-eq-ricci}, we obtain
%\begin{equation*}%\label{pluri-prop}
%\frac{1}{2}\sum_{i=1}^{2n} |u_i-v_i|^2 = 4c_1(n-1)(n-\trace R)+(c_1+c_2)\Big((\trace R)^2-\|R\|^2-\dotprod{RJ}{JR}\Big),
%\end{equation*}
\begin{equation}\label{ineq-ricci}
\Ric(X_i) \leq \frac{1}{4}(2A_i+B_i+C_i).
\end{equation}

On the other hand, we compute $2A_i+B_i+C_i$ by
\begin{multline*}
2A_i+B_i+C_i=2c_1\Big(2n-\trace{R}-n\big(\dotprod{RX_i}{X_i}+\dotprod{RJX_i}{JX_i}\big)\Big)\\
+(c_1+c_2)\Big(\big(\dotprod{RX_i}{X_i}+\dotprod{RJX_i}{JX_i}\big)\trace{R}\\
-\|RX_i\|^2-\|RJX_i\|^2+\dotprod{RJX_i}{JRX_i}\Big).
\end{multline*}
Therefore, summing in $i$ from $1$ to $2n$, we obtain
\begin{align*}
  4\Scal &\leq \sum_{i=1}^{2n}(2A_i+B_i+C_i) \\
   &= 8nc_1(n-\trace{R})+ (c_1+c_2)\Big(2(\trace{R})^2-2\|R\|^2-2\dotprod{RJ}{JR}\Big),
\end{align*}
that conclude our assertion.

Note that the equality holds if, and only if
\begin{equation*}
\sum_{i=1}^{2n} |u_i+v_i|^2 = 0,
\end{equation*}
that is, $u_i = -v_i$, for $1\leq j\leq n$, i.e.,
\begin{equation*}
\alpha(X,JY)+\alpha(JX,Y)=0
\end{equation*}
for $X,Y\in \mathfrak{X}(M)$, therefore, if, and only if, $f$ is anti-pluriharmonic.
%It is shown in \cite{Ferus1980} that this property implies that the second fundamental form $\alpha$ is parallel.
%If the equality holds on \eqref{ricci-inequality-3}, then by inequality \eqref{ricci-inequality-2}, we have that  $\dotprod{T}{X_i}^2+\dotprod{T}{JX_i}^2 =0,$ for $1\leq i \leq 2n$, i.e., $T=0$. Thus, $M^{2n}$ lies in a slice of $\mathbb{S}^{m-1}\times \{t\}$ for some $t\in \R$ and satisfies $\Ric = nc$. Therefore, by \cite[Theorem 1.2]{DajczerRodriguez86}, $M^{2n}$ has parallel second fundamental form.
\end{proof}
%In our next results, we show that the Ricci curvature and the Scalar curvature of a Kähler manifold $M^{2n}$ impose strong restrictions on the minimality of an isometric immersion of $M^{2n}$ into $\mathbb{S}_c^{m-1}\times\R$ and $\mathbb{S}_c^{m-k}\times \mathbb{H}_{-c}^k$.

In our next results, we show that the existence of a minimal isometric immersion of a Kähler manifold $M^{2n}$ into either $\mathbb{S}_c^{m-1}\times\R$ or $\mathbb{S}_c^{m-k}\times \mathbb{H}_{-c}^k$ imposes strong restrictions on the Ricci curvature and the scalar curvature of $M^{2n}$.

\begin{theorem}\label{thm4.1}
Let $f:M^{2n}\to\mathbb{S}_c^{m-1}\times\R$ be a minimal isometric immersion of a
Kähler manifold. Then the Ricci curvature of $M$ satisfies $\Ric \leq  c(2n-\|\partial_t^\top\|^2)/2$, with equality implying that $f(M^{2n})\subset \mathbb{S}_c^{m-1}\times\{t\}$ for some $t\in\R$, and that $f$ has parallel second fundamental form.
\end{theorem}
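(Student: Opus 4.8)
The plan is to rerun the Bochner-type bookkeeping behind Lemmas \ref{lemma-pluri-prop} and \ref{lemma-ricci}, but this time extracting a \emph{pointwise} bound for each $\Ric(X_i)$ rather than only a bound for $\Scal$. Here $c_1=c>0$ and $c_2=0$, so $c_1+c_2=c$, and as in the proof of Theorem \ref{thm-obstruction-QmxR} one has $RX=\dotprod{X}{\partial_t^\top}\partial_t^\top$; in particular $R$ has rank at most one, $\trace R=\|\partial_t^\top\|^2\in[0,1]$, and $\dotprod{RJX_i}{JRX_i}=0$ for every $i$ because $J$ is skew-symmetric, so $\dotprod{\partial_t^\top}{J\partial_t^\top}=0$. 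Fix, at a point $p$, an adapted orthonormal basis $\{X_1,\dots,X_{2n}\}$ with $X_{2j}=JX_{2j-1}$, write $T=\partial_t^\top$, $\tau=\trace R=\|T\|^2$ and $b_i=\dotprod{X_i}{T}^2+\dotprod{JX_i}{T}^2\ge0$, and keep the vectors $u_i,v_i\in E=\bigoplus_{j=1}^{2n}\TT_p^\perp M$ from the earlier proofs; recall $|u_i|^2=\sum_j\|\alpha(X_i,X_j)\|^2$, $|v_i|^2=\sum_j\|\alpha(JX_i,X_j)\|^2$ and $\dotprod{u_i}{v_i}=\sum_j\dotprod{\alpha(X_i,JX_j)}{\alpha(X_j,JX_i)}$.

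First I would specialize \eqref{exp-ricci-1}, \eqref{exp-ricci-2} and \eqref{exp-ricci-3} to this ambient space: since $\dotprod{RX_i}{X_i}\trace R=\|RX_i\|^2$ and $\dotprod{RJX_i}{JRX_i}=0$, all the $(c_1+c_2)$-terms collapse, and these identities become
\[\Ric(X_i)=-|u_i|^2+c(2n-1-\tau-2(n-1)\dotprod{X_i}{T}^2),\]
its analogue with $JX_i$ in place of $X_i$, and $\Ric(X_i)=-\dotprod{u_i}{v_i}+c(1-b_i)$. Using $\Ric(JX_i)=\Ric(X_i)$ and adding the first two gives $2\Ric(X_i)=-(|u_i|^2+|v_i|^2)+c(2(2n-1)-2\tau-2(n-1)b_i)$, while $|u_i+v_i|^2\ge0$ together with the third identity yields $|u_i|^2+|v_i|^2\ge-2\dotprod{u_i}{v_i}=2\Ric(X_i)-2c(1-b_i)$. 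Substituting and solving for $\Ric(X_i)$ produces
\[\Ric(X_i)\le\frac{c}{2}(2n-\tau-n\,b_i)\le\frac{c}{2}(2n-\tau),\]
the last step using $c>0$, $n\ge1$, $b_i\ge0$. As any unit tangent vector occurs as the $X_1$ of such a basis, this is precisely $\Ric\le c(2n-\|\partial_t^\top\|^2)/2$.

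For the equality statement, assume $\Ric\equiv c(2n-\|\partial_t^\top\|^2)/2$. Then equality holds in both displayed inequalities for every member of every adapted basis, which forces $u_i=-v_i$ and $b_i=0$ for all $i$. Reading $u_i=-v_i$ over a basis gives $\alpha(X,JY)+\alpha(JX,Y)=0$ for all $X,Y$, i.e.\ $f$ is anti-pluriharmonic; reading $b_i=0$ over a basis gives $\dotprod{X_i}{T}=\dotprod{JX_i}{T}=0$ for all $i$, hence $\partial_t^\top\equiv0$. Therefore the $\R$-component of $f$ is constant and $f(M^{2n})\subset\s^{m-1}_c\times\{t\}$ for some $t\in\R$ (equivalently $\trace R\equiv0$, cf.\ Proposition \ref{prop-slice}). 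Since this slice is totally geodesic in $\s^{m-1}_c\times\R$ and $\partial_t$ is a parallel normal field along it, $\alpha$ and $\nabla^\perp$ coincide with those of $f$ regarded as a minimal Kähler immersion into $\s^{m-1}_c$, so the theorem of Ferus and Rettberg \cite{Ferus1980,Rettberg}, that anti-pluriharmonic immersions into space forms have parallel second fundamental form, finishes the proof. Alternatively, on the slice $L=\dif\pi_2\circ f_\ast=0$, so $S=0$ and the Codazzi equation \eqref{Codazzi-equation} says $\nabla^\perp\alpha$ is totally symmetric; using $\nabla J=0$ the anti-pluriharmonic relation gives $(\nabla^\perp_X\alpha)(JY,Z)+(\nabla^\perp_X\alpha)(Y,JZ)=0$, and total symmetry then forces $\nabla^\perp\alpha=0$.

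The main obstacle is organizational rather than conceptual: one must pick the exact linear combination of \eqref{exp-ricci-1}, \eqref{exp-ricci-2}, \eqref{exp-ricci-3}, together with $\Ric(JX_i)=\Ric(X_i)$ and $|u_i+v_i|^2\ge0$, that isolates $\Ric(X_i)$ with the $b_i$-term carrying the correct (favorable) sign; the trace-level version of essentially this computation already appears in Lemma \ref{lemma-ricci}, so what remains is to keep everything pointwise. One should also take care that the deduction $\partial_t^\top\equiv0$ genuinely uses equality in \emph{all} tangent directions, not merely attainment of the supremum of $\Ric$ at a single direction at each point.
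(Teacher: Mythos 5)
Your proof is correct and essentially reproduces the paper's argument: your pointwise bound $\Ric(X_i)\le \tfrac{c}{2}\big(2n-\|\partial_t^\top\|^2-n b_i\big)$ is exactly the paper's inequality \eqref{ricci-inequality-2}, which the authors obtain by specializing the coefficients $A_i$, $B_i$, $C_i$ from the proof of Lemma \ref{lemma-ricci} and using $|u_i+v_i|^2\ge 0$ via \eqref{ineq-ricci}. The only (harmless) divergence is in the equality case: after deducing $\partial_t^\top=0$ and $\Ric=nc$ the paper simply invokes \cite[Theorem 1.2]{DajczerRodriguez86} for minimal Kähler submanifolds of the sphere, whereas you additionally extract anti-pluriharmonicity from $u_i=-v_i$ and conclude parallelism of $\alpha$ via the Ferus--Rettberg result (or your direct Codazzi total-symmetry argument), which is equally valid.
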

\begin{proof}
%At a point $p\in M$, we consider an orthonormal basis $\{X_1,\cdots,X_{2n}\}$ of $\mathrm{T}_p M$ such that $X_{2j}=JX_{2j-1}$, for $1\leq j\leq n$. Moreover,
In the case of $\mathbb{S}_c^{m-1}\times\R$, we have that
$RX= \dotprod{X}{\partial_t^\top}\partial_t^\top$, where $\partial_t^\top$ is the projection of the unit vertical vector $\partial_t$ (corresponding to the factor $\R$) onto $\mathrm{T}M$. Then, the coefficients $A_i$, $B_i$ and $C_i$ are given by
\begin{align*}
A_i &=c\Big(1-\dotprod{\partial_t^\top}{X_i}^2-\dotprod{\partial_t^\top}{JX_i}^2\Big),\\
B_i &= c\Big(2n-1-\|\partial_t^\top\|^2-2(n-1)\dotprod{\partial_t^\top}{X_i}^2\Big),\\
C_i &= c\Big(2n-1-\|\partial_t^\top\|^2-2(n-1)\dotprod{\partial_t^\top}{JX_i}^2\Big),
\end{align*}
for $1\leq i\leq 2n$. We compute $2A_i+B_i+C_i$ by
\begin{align*}
2A_i+B_i+C_i&=2c\Big(2n-\|\partial_t^\top\|^2-n(\dotprod{\partial_t^\top}{X_i}^2+\dotprod{\partial_t^\top}{JX_i}^2)\Big).
\end{align*}
Thus, by equation \eqref{ineq-ricci}, we obtain
\begin{equation}\label{ricci-inequality-2}
\Ric(X_i) \leq  \frac{c}{2}\Big(2n-\|\partial_t^\top\|^2-n(\dotprod{\partial_t^\top}{X_i}^2+\dotprod{\partial_t^\top}{JX_i}^2)\Big),
\end{equation}
and therefore,
\begin{equation}\label{ricci-inequality-3}
\Ric(X_i) \leq  \frac{c}{2}(2n-\|\partial_t^\top\|^2),
\end{equation}
for $1\leq i \leq 2n$ and $n\geq 1$.

If the equality holds on \eqref{ricci-inequality-3}, then by inequality \eqref{ricci-inequality-2}, we have that  $\dotprod{\partial_t^\top}{X_i}^2+\dotprod{\partial_t^\top}{JX_i}^2 =0,$ for $1\leq i \leq 2n$, i.e., $\partial_t^\top=0$. Thus, $f(M^{2n})$ lies into a slice $\mathbb{S}^{m-1}\times \{t\}$, for some $t\in \R$, and satisfies $\Ric = nc$. Therefore, by \cite[Theorem 1.2]{DajczerRodriguez86}, $f$ has parallel second fundamental form.
%By equations \eqref{exp-ricci-1}, \eqref{exp-ricci-2} and \eqref{exp-ricci-3}, we have
%\begin{equation}\label{exp-ricci-11}
%\Ric(X_i) =
%-\sum_{j=1}^{2n}\|\alpha(X_i,X_j)\|^2
%+c\Big((2n-1)-\|T\|^2-2(n-1)\dotprod{T}{X_i}^2\Big),
%\end{equation}
%\begin{equation}\label{exp-ricci-12}
%\Ric(JX_i) =
%-\sum_{j=1}^{2n}\|\alpha(JX_i,X_j)\|^2
%+c\Big((2n-1)-\|T\|^2-2(n-1)\dotprod{T}{JX_i}^2\Big)
%\end{equation}
%and
%\begin{equation}\label{exp-ricci-13}
%\Ric(X_i) =-\sum_{j=1}^{2n} \dotprod{\alpha(X_i,JX_j)}{\alpha(X_j,JX_i)}
%+c\Big(1-\dotprod{T}{X_i}^2-\dotprod{T}{JX_i}^2\Big).
%\end{equation}
%Consider $E =\bigoplus_{j=1}^{2n} \TT_p^{\perp}M$ endowed with the standard inner product. We set
%\begin{align*}
 % u_i &= (\alpha(X_i ,JX_1),\alpha(X_i ,JX_2),\ldots, \alpha(X_i ,JX_{2n}),\\
  %v_i &= (\alpha(X_1,JX_i),\alpha(X_2,JX_i),\ldots, \alpha(X_{2n},JX_i)).
%\end{align*}
%Then by equations \eqref{exp-ricci-11}, \eqref{exp-ricci-12}, \eqref{exp-ricci-13} and since $\Ric(JX,JY)=\Ric(X,Y)$, for all $X, Y \in \mathfrak{X}(M)$, and by parallelogram identity we have that,
%\begin{equation}\label{ricci-inequality}
%\|u_i+v_i\|^2 = -4\Ric(X_i)+2A+B+C,
%\end{equation}
%for $1\leq i\leq 2n$, where
%\begin{equation*}
%\Ric(X_i) \leq \frac{1}{4}(2A+B+C),
%\end{equation*}
%that is,
\end{proof}

\begin{remark} Given a minimal isometric immersion $f:M^n\to \Q^{m-1}_c\times\R$ of an arbitrary manifold $M^n$, $n\geq 2$, the Gauss equation provides a natural bound for the Ricci curvature, controlled by $\|\partial_t^\top\|^2$, on which $\|\partial_t^\top\|^2\leq 1$, in the following sense:
\begin{align*}
  \Ric&\leq c\big(n-1-\|\partial_t^\top\|^2\big), \text{ \ for $c>0$,} \\
  \Ric&\leq c(n-1)\big(1-\|\partial_t^\top\|^2\big), \text{ \ for $c<0$.}
\end{align*}
In both cases, the equality case holds if and only if either $f(M^2)$ is a totally geodesic surface in $\Q^{m-1}_c\times\R$, or $f(M^n)$ is a totally geodesic submanifold that lies into a slice of $\Q^{m-1}_c\times\R$.

When we assume that $M^{2n}$ is a Kähler manifold, with $n>1$, and $c>0$, this upper bound is less restrictive than the one provides by Theorem \ref{thm4.1}. However for $n=1$, the upper bound provided by the Gauss equation is more restrictive than the one provides our Theorem \ref{thm4.1}.
\end{remark}

\begin{remark}
We point out that the upper bound provides by Theorem \ref{thm4.1} also holds in $\mathbb{Q}_c^{m-1}\times\R$, with $c\in \R$. Moreover, when $c=0$, this upper bound is the same obtained by Gauss equation. For $c<0$, by the previous remark, we can check that this upper bound is less restrictive than the one provides by Gauss equation; and we recall that surfaces are the only minimal Kähler submanifolds in $\mathbb{H}_c^{m-1}\times\R$ (Theorem \ref{thm-obstruction-QmxR}).
\end{remark}

%The following corollary is straightforward computation.
%\begin{corollary}
%Let $f:M_{\widetilde{c}}^{2n}\to\mathbb{S}_c^{m-1}\times\R$ be a minimal isometric immersion of a
%Kähler manifold with constant sectional curvature $\widetilde{c}$, for $n\geq 1$. Then $\widetilde{c} \leq  \dfrac{c}{4n}(2n-\|T\|^2).$
%\end{corollary}

\begin{corollary}\label{thm4.2}
Let $f:M^{2n}\to\mathbb{S}_c^{m-1}\times\R$ be a minimal isometric immersion of a Kähler manifold. Then the scalar curvature of $M$ satisfies $\Scal\leq 2nc(n-\|\partial_t^\top\|^2)$. The equality holds if, and only if, $f$ is an anti-pluriharmonic immersion.
\end{corollary}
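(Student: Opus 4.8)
The plan is to specialise Lemma~\ref{lemma-ricci} to the ambient space $\Q^m=\mathbb{S}_c^{m-1}\times\R$, for which $c_1=c$, $c_2=0$, and the tensor $R$ has the explicit rank-one form $RX=\dotprod{X}{\partial_t^\top}\partial_t^\top$, exactly as recorded in the proof of Theorem~\ref{thm4.1}. The whole statement then follows by plugging this $R$ into the general scalar-curvature inequality and its equality case.

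First I would evaluate the three scalar invariants of $R$ that appear on the right-hand side of Lemma~\ref{lemma-ricci}. Taking an orthonormal basis $\{X_1,\dots,X_{2n}\}$ of $\TT_pM$ with $X_{2j}=JX_{2j-1}$ as in the lemma, one has $\trace R=\|\partial_t^\top\|^2$; also $\|R\|^2=\sum_{i,j}\dotprod{X_i}{\partial_t^\top}^2\dotprod{X_j}{\partial_t^\top}^2=\|\partial_t^\top\|^4=(\trace R)^2$; and $\dotprod{RJ}{JR}=\sum_i\dotprod{RJX_i}{JRX_i}=0$, because $RJX_i=\dotprod{JX_i}{\partial_t^\top}\partial_t^\top$ is proportional to $\partial_t^\top$ while $JRX_i=\dotprod{X_i}{\partial_t^\top}J\partial_t^\top$ is proportional to $J\partial_t^\top$, and $\dotprod{\partial_t^\top}{J\partial_t^\top}=0$. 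Substituting these into the inequality of Lemma~\ref{lemma-ricci} makes the coefficient $(\trace R)^2-\|R\|^2+\dotprod{RJ}{JR}$ vanish identically, so the $(c_1+c_2)/2$-term disappears and one is left with exactly $\Scal\leq 2nc(n-\|\partial_t^\top\|^2)$.

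The equality characterisation is then inherited directly: Lemma~\ref{lemma-ricci} already asserts that equality in its scalar-curvature bound holds if and only if $f$ is anti-pluriharmonic, and since the corollary's inequality is precisely that inequality after the above substitution, the same equivalence persists. Alternatively, one may avoid invoking Lemma~\ref{lemma-ricci} and instead reuse the computation inside the proof of Theorem~\ref{thm4.1}: summing the identity $2A_i+B_i+C_i=2c\big(2n-\|\partial_t^\top\|^2-n(\dotprod{\partial_t^\top}{X_i}^2+\dotprod{\partial_t^\top}{JX_i}^2)\big)$ over $i$ from $1$ to $2n$, and using $\sum_i\dotprod{\partial_t^\top}{X_i}^2=\sum_i\dotprod{\partial_t^\top}{JX_i}^2=\|\partial_t^\top\|^2$ (the second because $\{JX_i\}$ is again orthonormal), gives $4\Scal\leq\sum_i(2A_i+B_i+C_i)=8nc(n-\|\partial_t^\top\|^2)$, with equality forcing $\sum_i|u_i+v_i|^2=0$, i.e.\ $\alpha(X,JY)+\alpha(JX,Y)=0$, which is anti-pluriharmonicity.

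There is no real obstacle here; the corollary is an immediate consequence of Lemma~\ref{lemma-ricci}. The only point that deserves a moment of care is the evaluation of $\|R\|^2$ and $\dotprod{RJ}{JR}$ for this rank-one tensor — specifically observing that these two contributions exactly cancel in $(\trace R)^2-\|R\|^2+\dotprod{RJ}{JR}$ — and, if one follows the second route, noting that $\sum_i\dotprod{\partial_t^\top}{JX_i}^2=\|\partial_t^\top\|^2$ since $J$ is orthogonal.
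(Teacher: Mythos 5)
Your proposal is correct and is essentially the paper's own proof: the paper likewise observes that for $RX=\dotprod{X}{\partial_t^\top}\partial_t^\top$ one has $\dotprod{RJ}{JR}=0$ and $\|R\|^2=\|\partial_t^\top\|^4=(\trace R)^2$, and then applies Lemma \ref{lemma-ricci} directly, equality case included. Your alternative route through the $2A_i+B_i+C_i$ computation of Theorem \ref{thm4.1} is a harmless variant of the same argument.
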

\begin{proof}
Since $RX= \dotprod{X}{\partial_t^\top}\partial_t^\top,$ by a direct computation we get that $\dotprod{RJ}{JR} =0$ and $\|R\|^2 = \|\partial_t^\top\|^4 =( \trace{R})^2$. Therefore, by Lemma \ref{lemma-ricci}, we obtain our assertion.
\end{proof}
%\begin{proof}
%Summing over $i$ the equation \eqref{ricci-inequality}, we obtain,
%\begin{equation*}
%\sum_{i=1}^{2n} |u_i+v_i|^2 = -4\Scal +8nc(n-\|T\|^2).
%\end{equation*}
%Therefore, $\Scal \leq 2nc(n-\|T\|^2).$
%Suppose that $M$ satisfies $\Scal = 2nc(n-\|T\|^2)$. Then,
%\begin{equation*}
%\sum_{i=1}^{2n} |u_i+v_i|^2 = 0,
%\end{equation*}
%that is, $u_i = -v_i$, for $1\leq j\leq n$. i.e.,
%\begin{equation*}
%\alpha(X,JY)+\alpha(JX,Y)=0
%\end{equation*}
%for $X,Y\in \mathfrak{X}(M)$. It is shown in \cite{Ferus1980} that this property implies that the second fundamental form $\alpha$ is parallel.
%\end{proof}

\begin{remark}
Theorem \ref{thm4.1} give us an upper bound of the scalar curvature of $M^{2n}$, precisely $\Scal \leq nc(2n-\|\partial_t^\top\|^2)$. However, this upper bound is less restrictive than the one provides by Corollary \ref{thm4.2}.
\end{remark}
%The theorem \ref{thm4.1} can be extended when the ambient space is $\mathbb{S}^{m-k}\times \mathbb{H}^k$ with additional hypothesis on the trace of $R.$ First, it is interesting to note that only pluriharmonic surfaces exist in $\mathbb{S}^{m-k}\times \mathbb{H}^k$ if trace of $R$ is different to $n,$ by Corollary \ref{3.8}.
%%
%On the other hand, it is possible construct examples of minimal immersions of $M^{2n}$ on $\mathbb{S}^{m-k}\times \mathbb{H}^k.$ It is enough to take product of two surface in each factor.

\begin{corollary}\label{ricci-SxQ}
Let $f:M^{2n}\to \mathbb{S}_c^{m-k}\times \mathbb{H}_{-c}^k$ be a minimal isometric immersion of a Kähler manifold. Then $\Ric \leq c(2n-\trace{R})/2,$ with equality implying $f(M)\subset \mathbb{S}_c^{m-k}\times\{p\}$ for some $p\in\mathbb{H}_{-c}^k$, $\Ric = cn$ and that $f$ has parallel second fundamental form.
\end{corollary}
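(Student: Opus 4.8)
The approach is to run the argument of Theorem \ref{thm4.1} and Corollary \ref{thm4.2} in the ``dual'' setting $c_1=c>0$, $c_2=-c$, in which the simplification $c_1+c_2=0$ is available. Fixing at a point $p$ the orthonormal basis $\{X_1,\dots,X_{2n}\}$ of $\TT_pM$ with $X_{2j}=JX_{2j-1}$ as in the proof of Lemma \ref{lemma-ricci}, I would start from the identity \eqref{lemma-eq-ricci} together with the expression for $2A_i+B_i+C_i$ computed there; imposing $c_1+c_2=0$ annihilates its second line and leaves
\[
2A_i+B_i+C_i=2c\Big(2n-\trace{R}-n\big(\dotprod{RX_i}{X_i}+\dotprod{RJX_i}{JX_i}\big)\Big),
\]
so that \eqref{ineq-ricci} gives $\Ric(X_i)\leq \frac{c}{2}\big(2n-\trace{R}-n(\dotprod{RX_i}{X_i}+\dotprod{RJX_i}{JX_i})\big)$ for $1\leq i\leq 2n$. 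Because $R$ is a non-negative symmetric operator and $c>0$, the correction term is non-positive and can be dropped, yielding the claimed bound $\Ric\leq c(2n-\trace{R})/2$.

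For the rigidity part, the plan is as follows. If equality holds, then for every $X_i$ the sharper inequality above must also be an equality, forcing $\dotprod{RX_i}{X_i}+\dotprod{RJX_i}{JX_i}=0$; non-negativity of $R$ makes each summand vanish, so $\dotprod{RX_i}{X_i}=0$ for all $i$ and hence $\trace{R}=0$. Since a non-negative symmetric operator with zero trace vanishes, $R\equiv 0$, and Proposition \ref{prop-slice} then places $f(M^{2n})$ inside a slice $\mathbb{S}_c^{m-k}\times\{p\}$ for some $p\in\mathbb{H}_{-c}^k$; with $\trace{R}=0$ the bound reads $\Ric=cn$. Regarding $f$ as a minimal isometric immersion of the Kähler manifold $M^{2n}$ into $\mathbb{S}_c^{m-k}$ that attains the extremal Ricci bound $\Ric=cn$, the Dajczer--Rodríguez theorem \cite[Theorem 1.2]{DajczerRodriguez86} yields that $f$ has parallel second fundamental form.

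I do not anticipate a real obstacle here: the estimate is a direct specialization of Lemma \ref{lemma-ricci} to $c_1+c_2=0$, and the delicate point is the equality discussion, which uses the non-negativity of $R$ twice — once to discard the correction term in the curvature estimate, and once to deduce $R\equiv 0$ from $\trace{R}=0$ so that Proposition \ref{prop-slice} and the Dajczer--Rodríguez rigidity statement become applicable.
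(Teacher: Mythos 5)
Your proposal is correct and follows essentially the same route as the paper: specialize the coefficients $A_i,B_i,C_i$ to $c_1=-c_2=c$, apply \eqref{ineq-ricci}, drop the non-positive correction term using the non-negativity of $R$, and in the equality case deduce $\trace R=0$, invoke Proposition \ref{prop-slice} to land in a slice, and conclude via the Dajczer--Rodr\'iguez theorem. (Your intermediate remark that $R\equiv 0$ is harmless but unnecessary, since Proposition \ref{prop-slice} only requires $\trace R=0$; also note your factor $\tfrac{c}{2}$ corrects a typographical slip in the paper's intermediate display.)
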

\begin{proof}
Since $c_1=-c_2=c$, then the coefficients $A_i$, $B_i$ and $C_i$ are given by
\begin{align*}
A_i &= c\Big(1-\dotprod{RX_i}{X_i}-\dotprod{RJX_i}{JX_i}\Big),\\
B_i &= c\Big((2n-1)-\trace{R}-2(n-1)\dotprod{RX_i}{X_i}\Big),\\
C_i &= c\Big((2n-1)-\trace{R}-2(n-1)\dotprod{RJX_i}{JX_i}\Big),
\end{align*}
for $1\leq i\leq 2n$. We compute $2A_i+B_i+C_i$ by
\begin{align*}
2A_i+B_i+C_i&=2c\Big(2n-\trace{R}-n(\dotprod{RX_i}{X_i}+\dotprod{RJX_i}{JX_i})\Big).
\end{align*}
Thus, by equation \eqref{ineq-ricci}, we obtain
\begin{equation*}
\Ric(X_i) \leq  2c\Big(2n-\trace{R}-n(\dotprod{RX_i}{X_i}+\dotprod{RJX_i}{JX_i})\Big),
\end{equation*}
and therefore,
\begin{equation*}
\Ric(X_i) \leq  \frac{c}{2}(2n-\trace{R}),
\end{equation*}
for $1\leq i \leq 2n$, since $R$ is a non-negative operator.

If the equality holds, we have that  $\dotprod{RX_i}{X_i}+\dotprod{RJX_i}{JX_i} =0,$ for $1\leq i \leq 2n$, i.e., $\trace{R}=0$, and thus $M$ satisfies $\Ric = nc$. By Proposition \ref{prop-slice}, $f(M^{2n})$ lies into a slice $\mathbb{S}^{m-k}\times \{p\}$ for some $p\in \mathbb{H}^{k}$ and, therefore, by \cite[Theorem 1.2]{DajczerRodriguez86}, $f$ has parallel second fundamental form.
\end{proof}
% Incluir observação sobre o traço de R = \|pi_2 \circ f_*\|^2.
\begin{remark} Given a minimal isometric immersion $f:M^n\to \mathbb{Q}_c^{m-k}\times \mathbb{Q}_{-c}^k$ of an arbitrary manifold $M^n$, $n\geq 2$ and $c\neq 0$, the Gauss equation provides a natural bound for the Ricci curvature, controlled by $\trace R$, on which $0\leq \trace R\leq n$, in the following sense:
\begin{align*}
  \Ric&\leq c\big(n-1-\trace R\big), \text{ \ for $c>0$,} \\
  \Ric&\leq c(n-1)\big(1-\trace R\big), \text{ \ for $c<0$ \ and \ $\trace R \leq 1$,}\\
  \Ric&\leq c\big(1-\trace R\big), \text{ \ for $c<0$ \ and \ $\trace R>1$.}
\end{align*}
For either $c>0$ or $c<0$ and $\trace R < 1$, the equality case holds if and only if either $f(M^2)$ is a totally geodesic surface in $\mathbb{Q}_c^{m-k}\times \mathbb{Q}_{-c}^k$, or $f(M^n)$ is a totally geodesic submanifold that lies into a slice $\mathbb{Q}_c^{m-k}\times\{p\}$, for some $p\in\mathbb{Q}_{-c}^k$. For $c<0$ and $\trace R > 1$,  the equality case holds if and only if either $f(M^2)$ is a totally geodesic surface in $\h^{m-1}_c\times\mathbb{S}_{-c}^k$, or $f(M^n)$ is a totally geodesic submanifold that lies into a slice $\{p\}\times\mathbb{S}_{-c}^k$, for some $p\in\mathbb{H}_c^{m-k}$. Finally, for $c<0$ and $\trace R = 1$, the equality case holds if and only if $f(M^2)$ is a totally geodesic surface in $\mathbb{H}_c^{m-k}\times \mathbb{S}_{-c}^k$.

When we assume that $M^{2n}$ is a Kähler manifold, with $n>1$ and $c>0$, this upper bound is less restrictive than the one provides by Theorem \ref{thm4.1}. However for $n=1$, the upper bound provided by the Gauss equation is more restrictive than the one provides our Theorem \ref{thm4.1}.

\begin{remark}
We point out that the upper bound provides by Corollary \ref{ricci-SxQ} also holds in $\mathbb{H}_c^{m-k}\times \mathbb{S}_{-c}^k$. However, by the previous remark, we can check that this upper bound is less restrictive than the one provides by Gauss equation.
\end{remark}
\end{remark}

\begin{corollary}
Let $f:M^{2n}\to \mathbb{Q}_c^{m-k}\times \mathbb{Q}_{-c}^k$ be a minimal isometric immersion of a Kähler manifold, with $c\neq 0$. Then the scalar curvature of $M$ satisfies $\Scal \leq 2nc(n-\trace{R}).$ The equality holds if, and only if, $f$ is an anti-pluriharmonic immersion.
\end{corollary}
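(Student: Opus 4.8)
The plan is to obtain this as an immediate specialization of Lemma~\ref{lemma-ricci}. Write the ambient $\mathbb{Q}_c^{m-k}\times\mathbb{Q}_{-c}^k$ in the notation of Section~\ref{preliminaries} as $\Q^{n_1}_{c_1}\times\Q^{n_2}_{c_2}$ with $n_1=m-k$, $c_1=c$, $n_2=k$, $c_2=-c$, so that $c_1+c_2=0$. Since $f$ is a minimal isometric immersion of a Kähler manifold, Lemma~\ref{lemma-ricci} applies and gives
\[\Scal \leq 2nc_1(n-\trace R)+\frac{c_1+c_2}{2}\Big((\trace R)^2-\|R\|^2+\dotprod{RJ}{JR}\Big).\]
With $c_1+c_2=0$ the second summand vanishes identically, leaving precisely $\Scal\le 2nc(n-\trace R)$.

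For the equality statement, Lemma~\ref{lemma-ricci} already records that equality in its scalar estimate holds if and only if $\sum_i|u_i+v_i|^2=0$, i.e.\ $\alpha(X,JY)+\alpha(JX,Y)=0$, i.e.\ $f$ is anti-pluriharmonic; imposing $c_1+c_2=0$ only removes a term that was already zero and does not affect the pointwise inequality \eqref{ineq-ricci} from which the estimate is summed, so this characterization transfers verbatim. I anticipate no genuine obstacle here: unlike Corollary~\ref{ricci-SxQ}, we do not even need the non-negativity of $R$ or its eigenvalue bounds, since once the $(c_1+c_2)$-term drops the scalar estimate is an exact algebraic consequence of Lemma~\ref{lemma-ricci}. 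The only point requiring a moment's care is the bookkeeping of which factor is labelled $\Q^{n_1}_{c_1}$, and this is harmless since the final bound depends only on $c=c_1$ and $\trace R$.
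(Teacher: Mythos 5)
Your proposal is correct and is exactly the argument the paper intends (it leaves this corollary without a written proof precisely because it is the specialization of Lemma~\ref{lemma-ricci} to $c_1=c$, $c_2=-c$, where the $(c_1+c_2)$-term vanishes and the equality characterization as anti-pluriharmonicity carries over unchanged). Your bookkeeping of the factors and of the tensor $R$ (associated to the second factor via $\pi_2$) is consistent with the paper's conventions, so nothing further is needed.
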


%\textcolor{red}{$\bullet$ Na igualdade vale que $f(M)$ mora em um slice?}

\begin{remark} In the previous section, we see that minimal isometric immersions into $\mathbb{Q}_c^{m-k}\times \mathbb{Q}_{-c}^k$ with $\trace{R}=n$ are not necessarily surfaces (Pluriharmoniciy property’s Lemma and Corollary \ref{3.8}). However, the latest corollaries give us some information about what occurs in this case; we obtain that $\Ric\leq nc/2$ and $\Scal\leq 0$.
\end{remark}

%\begin{remark} In the previous section, we see that minimal isometric immersions into $\mathbb{Q}_c^{m}\times \mathbb{Q}_{c}^m$ with $\trace{R}=n$ are not necessarily surfaces (Pluriharmoniciy property’s Lemma and Corollary \ref{3.8}). However, since $(\trace{R})^2-\|R\|^2-\dotprod{RJ}{JR}=0$ and $2 R= \id_{\TT M}$, by Lemma \ref{lemma-ricci}, we get $\Scal \leq cn(4n-1)/2$.
%\end{remark}

%\begin{corollary}
%Let $f:M^{2n}\to \mathbb{S}_c^{m-k}\times \mathbb{H}_{-c}^k$ be a minimal isometric immersion of a Kähler manifold, for $n\geq 2$ and $c>0$. Then $\Scal\leq 0,$ with equality implying $f(M)\subset \mathbb{S}_c^{m-k}\times\{t\}$ for some $t\in\mathbb{H}_{-c}^k$, and that the second fundamental form is parallel.
%\end{corollary}
\begin{remark} Let $f:M^{2n}\to \widetilde{M}^m$ be a minimal isometric immersion of a Kähler manifold into an arbitrary Riemannian manifold $\widetilde{M}^m$ and denote by $\widetilde{R}$ the Riemann curvature tensor of $\widetilde{M}$. In the general case, with the conventions used in the proofs of Lemma \ref{lemma-pluri-prop} and Lemma \ref{lemma-ricci}, our results are obtained by the study of the quantities $\omega_{i,-}=|u_i-v_i|^2$ and $\omega_{i,+}=|u_i+v_i|^2$, given by
\begin{multline*}
\omega_{i,\pm} = -2\big(\Ric(X_i)\pm\Ric(X_i)\big)
+\sum_{j=1}^{2n}\Big(\dotprod{\widetilde{R}(X_j,X_i)X_i}{X_j}\\
\pm2\dotprod{\widetilde{R}(X_j,X_i)JX_i}{JX_j}
+\dotprod{\widetilde{R}(X_j,JX_i)JX_i}{X_j}\Big),
\end{multline*}
where $\{X_1,\cdots,X_{2n}\}$ is an orthonormal basis of $\mathrm{T}_p M$, such that $X_{2j}=JX_{2j-1}$, for $1\leq j\leq n$. Then $\sum_{i=1}^{2n}\omega_{i,-}=0$ is the pluriharmonicity property and $\omega_{i,+}\geq 0$ provides the Ricci and scalar estimates for $M^{2n}$. When $\widetilde{M}^m$ is a conformally flat Riemannian manifold, its Riemann curvature tensor is given by
\begin{multline*}\dotprod{\widetilde{R}(X, Y)Z}{W} = \mathcal{S}(X,W)\dotprod{Y}{Z}+\mathcal{S}(Y,Z)\dotprod{X}{W}\\
-\mathcal{S}(X,Z)\dotprod{Y}{W}- \mathcal{S}(Y,W)\dotprod{X}{Z},
\end{multline*}
where $\mathcal{S}$ is the \emph{Schouten tensor} of $\widetilde{M}^m$, defined by
\begin{equation*}
\mathcal{S}(X,Y) = \frac{1}{m-2}\Bigg(\widetilde{\Ric} (X,Y) - \frac{\widetilde{\Scal}}{2(m-1)}\dotprod{X}{Y}\Bigg),
\end{equation*}
for $X, Y,Z,W \in \mathfrak{X}(\widetilde{M}).$ If $\mathcal{S}|_{\TT M}$ denotes the restriction of $\mathcal{S}$ to $\TT M\times \TT M$, then
\begin{align*}
\sum_{i=1}^{2n} \omega_{i,-} &= 8(n-1)\trace{\mathcal{S}|_{\TT M}},\\
\sum_{i=1}^{2n} \omega_{i,+} &=4\Big(2n\trace{\mathcal{S}|_{\TT M}}-\Scal\Big).
\end{align*}
Therefore, if $f:M^{2n}\to \widetilde{M}^m$ is a minimal isometric immersion of a Kähler manifold into a conformally flat Riemannian manifold $\widetilde{M}^m$ then $\Scal \leq 2n\trace{\mathcal{S}|_{\TT M}},$ where the equality holds if, and only if, $f$ is an anti-pluriharmonic immersion. Moreover, if $f$ satisfies $\trace{\mathcal{S}|_{\TT M}}\neq 0$ then it is pluriharmonic if, and only if, $n=1$. We observe that special cases satisfying this trace assumption were studied by Ferreira, Rigoli and Tribuzy in \cite{FerreiraRigoliTribuzy93}.
\end{remark}

\bibliographystyle{amsplain}
\bibliography{references}

\end{document}